\documentclass[10pt]{article}
\usepackage{graphicx}
\usepackage{latexsym}
\usepackage{amsmath}
\usepackage{amssymb}
\usepackage{amsthm}
\usepackage{multirow}
\usepackage{authblk}
\usepackage[spaces,hyphens]{url}
\usepackage{bm}
\voffset=-1cm
\oddsidemargin -.25in
\evensidemargin -.25in
\textheight=630pt
\textwidth=450pt
\topmargin-.75in
\usepackage[square,sort,comma,numbers]{natbib}
\usepackage{hyperref}
\usepackage{tikz}
\usepackage{multicol}
\usepackage{mathtools}

\def \N {{\mathbb N}}
\def \Z {{\mathbb Z}}
\def \R {{\mathbb R}}

\def \A {{\mathcal A}}
\def \D {{\mathcal D}}

\def \Gl {{\mathcal G}}


\def \J  {{\mathbf J}}

\def \al {{\alpha}}

\def \ga {{\gamma}}
\def \Om{{\Omega}}

\def \Ga {{\Gamma}}
\def \la {{\lambda}}

\newtheorem{theorem}{Theorem}[section]
\newtheorem{cor}[theorem]{Corollary}

\newtheorem{pro}[theorem]{Proposition}
\newtheorem{rem}[theorem]{Remark}

\newtheorem{ex}[theorem]{Example}

\usepackage{setspace}
\usepackage{natbib}
\usepackage[english]{babel}
\usepackage[euler]{textgreek}
\usepackage{stmaryrd}
\usepackage{mathtools}
\usepackage{pgf,tikz}
\usepackage{tikz}
\usepackage{tkz-graph}
\usepackage{array}
\tikzstyle{vertex}=[circle,draw, inner sep=0pt, minimum size=4pt] 
\newcommand{\vertex}{\node[vertex]}
\usetikzlibrary{decorations.markings}
\usepackage{lipsum}
\title{Generating Graphs of Finite Dihedral Groups} 
\author[1]{A. Satyanarayana Reddy}
\author[2]{Kavita Samant}
\affil[1,2]{Department of Mathematics\\Shiv Nadar Institution of Eminence, Delhi-NCR, India.}
\affil[2]{ks299@snu.edu.in;}
\affil[1]{satya.a@snu.edu.in}

\date{}

\begin{document}
\maketitle
\begin{abstract}
    For a group $G$, the generating graph $\Ga(G)$ is defined as the graph with the vertex set $G$, and any two distinct vertices of $\Ga(G)$ are adjacent if they generate $G$. In this paper, we study the generating graph of $D_n,$ where $D_n$ is a Dihedral group of order $2n$. We explore various graph theoretic properties, and determine complete spectrum of the adjacency and the Laplacian matrix of $\Ga(D_n)$. Moreover, we compute some distance and degree based topological indices of $\Ga(D_n)$.
\end{abstract}
\textbf{Keywords.} Generating graphs; Adjacency matrix; Laplacian matrix; Spectrum; Topological indices.\\
\textbf{Mathematics Subject Classification.} 05C25, 05C50, 20D60.
\section{Introduction}
A group $G$ is said to be {\em two generated} if there exists a pair of elements that generates $G$. The {\em generating graph} of $G$, denoted by $\Ga(G),$ is a graph whose vertex set is $G$, and any two distinct vertices are adjacent if they generate $G$. Thus it is natural to consider only two generated groups; otherwise, the generating graphs are empty graphs. The fundamental idea behind generating graphs is the generation of groups by its two elements, which was first studied in a probabilistic perspective. The idea can be stated as, any two elements selected at random from a finite group $G$ generate $G$ with the probability $P(G)$, which is defined as
$$P(G)=\frac{|\{(x,y)\in G\times G:\langle x,y\rangle=G\}|}{|G|^2}.$$  For finite simple groups, many deep results had been studied using the probabilistic notion by Liebeck and Shalev in~\cite{Lieback}, Guralnick and Kantor in~\cite{Guralnick} and Andrea Lucchini and E. Detomi in~\cite{MR1987022} which can be equivalently stated as theorems that ensure that $\Ga(G)$ is a rich graph. Later concerning the study of the structure of two generated groups and their generating pairs, the concept of the generating graphs was defined in~\cite{MR2816431, MR2515391} by Andrea Lucchini and Attila Mar\'oti. These papers investigated various graph theoretic properties of the generating graphs of finite groups and proposed many questions. \par
In this article, our prime focus is on the generating graph of $D_n,$ denoted by $\Ga(D_n)$ with the vertex set $V$ and the edge set $E$. For our convenience, we write $\Ga_n$ instead of $\Ga(D_n)$ throughout this paper. We will start with some preliminary definitions and concepts of $D_n$ and its generating sets, which are covered in Section~\ref{sec:pre}. In Section~\ref{sec3:gan}, we will explore various graph theoretic properties for a better understanding of the structure of $\Ga_n.$ Since Dihedral groups are solvable, thus quite a lot is already known for the generating graphs of finite solvable groups related to the connectivity of graphs, given in~\cite{crestani2013generating, lucchini2017diameter, planar} etc. However for the sake of completion, we will give short proofs of these properties, and in addition, we will discuss more properties. \par
In Section~\ref{sec4:spec}, we will define the adjacency and the Laplacian matrix for $\Ga_n$, denoted by $A(n)$ and $L(n)$ respectively. We divide Section~\ref{sec4:spec} further into three subsections. In Subsection~\ref{sec4:AM}, we will show that corresponding to each $n\in \N,$ if we take $n_0$ as the square free part of $n,$ then
the eigenvalues of $A(n)$ are $0$, $\frac {1}{2}\left({\varphi(n)\pm \sqrt{\varphi(n)^2+4n\varphi(n)}}\right),$ and $\frac{n}{n_0}c_{n_0}(j)$ for $1\leq j\leq (n_0-1),$ where  $c_{n_0}(j)$ is the Ramanujan's sum. Thus, $\Ga_n$ is an integral graph depending on whether the two values $\frac {1}{2}\left({\varphi(n)\pm \sqrt{\varphi(n)^2+4n\varphi(n)}}\right)$ are integers or not. It can be seen that $\Ga_n$ is an integral graph if and only if $n=2^a$ where $a\in \N.$
Moreover, in Subsection~\ref{sec4:alter} we will give an alternative approach to compute the spectrum of $A(n).$ In Subsection~\ref{sec4:SLM}, we will show that the eigenvalues $L(n)$ are 0, $2\varphi(n)$, $n$, $n+\varphi(n)$, $2\varphi(n)-\frac{n}{n_0}c_{n_0}(j)$ where $1\leq j\leq (n_0-1).$ Hence for any $n\in \N,\, \Ga_n$ is a Laplacian integral graph. \par
In Section~\ref{sec5:top}, we will compute few topological indices of the graph $\Delta_n,$ {\it i.e.,} the graph obtained by removing the isolated vertices from $\Ga_n.$ Let $\mathcal {G}$ be the collection of all graphs. A map $I:\mathcal{G}\rightarrow \R$ is called a {\em topological index}, if $\mathcal{G}_1\cong \mathcal{G}_2$ implies that $I(\mathcal{G}_1)=I(\mathcal{G}_2).$ Several topological indices are based on the distances between pairs of vertices or the degree of the graph's vertices. These indices are used to study the structure of molecular graphs, which have wide applications in chemistry. Some well-known topological indices are the Wiener index, the Schultz index, the Zagreb index, etc. Over the recent year, several classes of graphs that are associated with groups of different topological indices have been studied~\cite{Nindex, Ncindex, zindex}, etc. We will study certain distance-degree based topological indices as a result of the aforementioned work.

\section{Preliminaries} \label{sec:pre}
In what follows, we present a few definitions, properties, and results related to Dihedral groups that we will use in rest of the paper. 
The Dihedral group $D_{n}$ of order $2n$ is defined by $$D_{n}=\langle r,s\,|\,r^n=s^2=1,\,sr=r^{-1}s\rangle \;\mbox{for any}\; n\geq 1.$$ We denote $R=\langle r \rangle,$ the group of rotations of  $D_n$ and $$\Omega_{1}=\{r^{i}\,:\, \gcd(n,i)=1\},  \Omega_{2}=\{sr^{i}\,:\, 0\leq i\leq (n-1)\}\; \mbox{and}\; 
\Omega_{3}=R\setminus \Omega_{1}.$$
So we have $$D_n=R\cup \Omega_2=\Omega_1\cup \Omega_2\cup \Omega_3.$$

It is known that $D_{n}$ is a two generated group as $D_n=\langle r,s\rangle.$ Let us collect all those pairs which generate $D_{n}$. For each $n\in \N,$ we define a set $Gen(n)$ by 
$$Gen(n)=\{(x,y)\in D_n\times D_n:\,\langle x,y\rangle=D_{n}\}.$$
Clearly it is a generating set of $D_{n}.$
It is exactly the set of all the generating pairs of $D_{n}$. First, note that any generator of $R$ with any reflection will generate $D_{n}$. Thus, we conclude that $(\Omega_{1}\times \Omega_{2})\cup (\Omega_{2}\times \Omega_{1})\subseteq Gen(n)$. However, $((\Omega_{3}\times \Omega_{i})\cup (\Omega_{i}\times \Omega_{3}))\cap Gen(n)=\emptyset,$ where $i=1,2.$ 
It is easy to see that $\langle sr^{i},sr^{j}\rangle=D_{n}$ if and only if $sr^{i} sr^{j}=r^{j-i}\in \Omega_{1}$ if and only if $\gcd(j-i,n)=1,$ where $0\leq i,j\leq n-1$. Thus, we can say that if we choose any two reflections randomly, they may not generate $D_{n}$. Now for each $sr^i\in \Om_2,$ where $0\leq i\leq n-1$, we define a set $$B_{i}=\{sr^{j}:\,j=a+i\, \text{ where } a\in U(n)\},$$ where $U(n)$ is the group of units of ring $\Z_{n}.$ Using this, we define an another set of the form 
$$A_{i}=\{sr^{i}\}\times B_{i}.$$  We observe that for each $i,$ the set $A_{i}\subseteq \Omega_{2}\times \Omega_{2}$ and  $\bigcup\limits_{i=0}^{n-1}A_{i}\subseteq Gen(n).$ Therefore,
$$Gen(n)=(\Omega_{1}\times \Omega_{2})\cup (\Omega_{2}\times \Omega_{1})\cup(\bigcup\limits_{i=0}^{n-1} A_{i}).$$ 
Notice that $|\Omega_{1}\times \Omega_{2}|=|\Omega_{2}\times \Omega_{1}|=n\varphi(n)$ and $|A_{i}|=\varphi(n)$ for each $i$. Thus, 
\begin{equation}\label{pre:sum}
|Gen(n)|=3n\varphi(n).
\end{equation}
Consequently, we have $P(D_n),$ the probability of two randomly chosen elements that generates $D_n$ is $\frac{3}{4}\left(\frac{\varphi(n)}{n}\right).$
By the definition, the generating graph of a group is an undirected graph. Thus, in $\Ga_n$ there is an edge between $a,b\in D_n$ if and only if $(a,b)\in Gen(n).$ In particular, the edge set of $\Ga_n$ is $E=\{\{x,y\}\,:\,(x,y)\in Gen(n)\}.$  Thus the number of edges in $\Ga_n$ is $\frac{3n\varphi(n)}{2}.$ Let $G$ be a two generated group and let $S(G)=\{x\in G\,:\,\exists \, z\in G,\langle x, z\rangle = G\}.$ Then, clearly $S(D_n)=\Omega_1\cup \Omega_2.$ Hence $\Ga_n$ is a disconnected graph, and the number of connected components equals $n-\varphi(n)+1.$ The subgraph of $\Ga_n$ induced by the vertices $S(D_n)$ is one connected component, and others are the isolated vertices that correspond to the members of $\Omega_3.$ We denote the graph obtained from $\Ga_n$ by removing the isolated vertices as $\Delta_n.$ For notational convenience, we adopt the notation $\Ga_X$ to denote the subgraph of $\Ga_n,$ which is induced by a subset $X$ of $V.$  Here, $\Delta_n=\Ga_{S(D_n)}.$ \\
If $N(x)$ denotes the neighbourhood of a vertex $x,$ then we have
\begin{equation}\label{eq:nbd}
 N(x)=\begin{dcases}
       \Omega_2 & \mbox{if $x\in \Omega_1,$}\\
       \Omega_1\cup B_i & \mbox{if $x=sr^i\in\Omega_2,$}\\
       \emptyset & \mbox{if $x\in \Omega_3.$}\\
      \end{dcases}
\end{equation}
Thus if $x\in V(\Ga_n),$ then $\deg(x)\in \{n,2\varphi(n),0\}.$ Observe that $2\varphi(n)=n$ if and only if $n=2^{a},$ where $a\geq 1$, thus in that case the vertex degrees of $\Ga_{2^a}$ are $0$ and $2^a$. In other words, $\Delta_{2^a}$ is $2^a$-regular graph. Let us see some examples below.
\begin{ex}
The graphs $\Ga_3,$ $\Ga_4$ and $\Ga_5$ were shown in Figure~\ref{fig:ga4}. 
\begin{figure}[!ht]
	\centering
	\begin{tabular}{ccc}
	\begin{tikzpicture}[scale=0.35]
	\draw [line width=0.5pt] (5,7.1)-- (5,-1);
	\draw [line width=0.5pt] (5,7.1)-- (1,3);
	\draw [line width=0.5pt] (5,7.1)-- (10,3);
	\draw [line width=0.5pt] (3,7.1)-- (5,-1);
	\draw [line width=0.5pt] (3,7.1)-- (1,3);
	\draw [line width=0.5pt] (3,7.1)-- (10,3);
 
	\draw [line width=0.5pt] (1,3)-- (5,-1);	
	\draw [line width=0.5pt] (10,3)-- (1,3);
	\draw [line width=0.5pt] (5,-1)-- (10,3);	
	\draw (0.5,3) node[anchor=north west] {$s$};
	\draw (10,3) node[anchor=north west] {$sr$};
	\draw (5,-1) node[anchor=north west] {$sr^{2}$};
	\draw (3,7.9) node[anchor=north west] {$r^{2}$};
	
	\draw (5,7.6) node[anchor=north west] {$r$};
	\draw (7,9) node[anchor=north west] {$1$};
	
	\begin{scriptsize}
	\draw [fill=black] (1,3) circle (1.5pt);
	\draw [fill=black] (10,3) circle (1.5pt);
	\draw [fill=black] (5,-1) circle (1.5pt);
	\draw [fill=black](3,7.1)circle (1.5pt);

	\draw [fill=black](5,7.1)circle (1.5pt);
	\draw [fill=black](7,9)circle (1.5pt);
	\end{scriptsize}
	\end{tikzpicture}
	
  &\hspace*{10mm}
	\begin{tikzpicture}[scale=0.4]
	\draw [line width=0.5pt] (10,6)-- (14,11);
	\draw [line width=0.5pt] (14,11)-- (8,11);
	\draw [line width=0.5pt] (8,11)-- (8,5);
	\draw [line width=0.5pt] (8,5)-- (14,5);
	\draw [line width=0.5pt] (14,5)-- (14,11);
	\draw [line width=0.5pt] (14,11)-- (10,10);
	\draw [line width=0.5pt] (10,10)-- (8,11);
	\draw [line width=0.5pt] (8,11)-- (10,6);
	\draw [line width=0.5pt] (10,6)-- (8,5);
	\draw [line width=0.5pt] (8,5)-- (10,10);
	\draw [line width=0.5pt] (10,10)-- (14,5);	
	\draw [line width=0.5pt] (14,5)-- (10,6);
	
	\draw (6.8,5) node[anchor=north west] {$sr^{2}$};
	\draw (14,11) node[anchor=north west] {$s$};
	\draw (6.5,11) node[anchor=north west] {$sr$};
	\draw (10,5.8) node[anchor=north west] {$r$};
	\draw (16,7.5) node[anchor=north west] {$r^{2}$};
	\draw (16,8.6) node[anchor=north west] {$1$};
	\draw (9.79,11.3) node[anchor=north west] {$r^{3}$};
	\draw (14,5) node[anchor=north west] {$sr^{3}$};
	
	\begin{scriptsize}
	\draw [fill=black](16,8.5)circle (1.5pt);
	\draw [fill=black] (8,5) circle (1.5pt);
	\draw [fill=black] (14,11) circle (1.5pt);
	\draw [fill=black] (8,11) circle (1.5pt);
	\draw [fill=black] (10,6) circle (1.5pt);
	\draw [fill=black](16,7.5)circle (1.5pt);
	\draw [fill=black] (10,10) circle (1.5pt);
	\draw [fill=black](14,5)circle (1.5pt);
	\end{scriptsize}
	\end{tikzpicture}
	 & \hspace*{10mm}
	\begin{tikzpicture}[scale=0.37]
	\draw [line width=0.5pt] (5,7.1)-- (1.2,1);
	\draw [line width=0.5pt] (5,7.1)-- (1,3);
	\draw [line width=0.5pt] (5,7.1)-- (10,3);
	\draw [line width=0.5pt] (5,7.1)-- (7,1);
	\draw [line width=0.5pt] (5,7.1)-- (5,-1);
	\draw [line width=0.5pt] (3,7.1)-- (1.2,1);
	\draw [line width=0.5pt] (3,7.1)-- (1,3);
	\draw [line width=0.5pt] (3,7.1)-- (10,3);
	\draw [line width=0.5pt] (3,7.1)-- (7,1);
	\draw [line width=0.5pt] (3,7.1)-- (5,-1);
	\draw [line width=0.5pt] (7,7.1)-- (1.2,1);	
	\draw [line width=0.5pt] (7,7.1)-- (1,3);
	\draw [line width=0.5pt] (7,7.1)-- (10,3);
	\draw [line width=0.5pt] (7,7.1)-- (7,1);
	\draw [line width=0.5pt] (7,7.1)-- (5,-1);
	\draw [line width=0.5pt] (9,7.1)-- (1.2,1);
    \draw [line width=0.5pt] (9,7.1)-- (1,3);
	\draw [line width=0.5pt] (9,7.1)-- (10,3);
	\draw [line width=0.5pt] (9,7.1)-- (7,1);
	\draw [line width=0.5pt] (9,7.1)-- (5,-1);
	\draw [line width=0.5pt] (1.2,1)-- (1,3);
	\draw [line width=0.5pt] (1.2,1)-- (10,3);
	\draw [line width=0.5pt] (1.2,1)-- (7,1);
	\draw [line width=0.5pt] (1.2,1)-- (5,-1);
	\draw [line width=0.5pt] (1,3)-- (5,-1);
	\draw [line width=0.5pt] (1,3)-- (7,1);
	\draw [line width=0.5pt] (1,3)-- (10,3);
	\draw [line width=0.5pt] (7,1)-- (5,-1);
	\draw [line width=0.5pt] (7,1)-- (10,3);
	\draw [line width=0.5pt] (5,-1)-- (10,3);

	\draw (0,1) node[anchor=north west] {$sr^{2}$};
	\draw (0,3) node[anchor=north west] {$s$};
	\draw (10,3) node[anchor=north west] {$sr$};
	\draw (7,1) node[anchor=north west] {$sr^{3}$};
	\draw (5,-1) node[anchor=north west] {$sr^{4}$};
	\draw (9,7.6) node[anchor=north west] {$r^{4}$};
	\draw (7,7.6) node[anchor=north west] {$r^{3}$};
	\draw (3,7.6) node[anchor=north west] {$r$};
	\draw (5,7.6) node[anchor=north west] {$r^{2}$};
	\draw (7,9) node[anchor=north west] {$1$};

	\begin{scriptsize}
	\draw [fill=black](1.2,1)circle (1.5pt);
	\draw [fill=black] (1,3) circle (1.5pt);
	\draw [fill=black] (10,3) circle (1.5pt);
	\draw [fill=black] (7,1) circle (1.5pt);
	\draw [fill=black] (5,-1) circle (1.5pt);
	\draw [fill=black](3,7.1)circle (1.5pt);
	\draw [fill=black] (7,7.1) circle (1.5pt);
	\draw [fill=black](5,7.1)circle (1.5pt);
	\draw [fill=black](9,7.1)circle (1.5pt);
	\draw [fill=black](7,9)circle (1.5pt);
	\end{scriptsize}
	\end{tikzpicture}
	\end{tabular}
\caption{Generating graph of $D_3$, Generating graph of $D_{4},$ and Generating graph of $D_{5}.$}
	\label{fig:ga4}	
\end{figure}
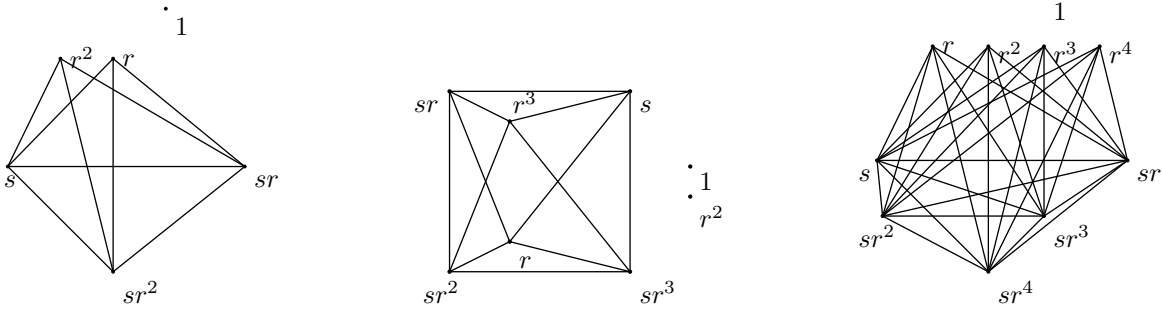
\end{ex}
\section{Few properties of $\Delta_n$ }\label{sec3:gan}
In this section, we will discuss many graph theoretic properties; for their standard definition, the reader can refer to~\cite{west2001introduction}. In order to study the properties of the graph $\Ga_n$ of $D_n$ it is sufficient to study the properties of $\Delta_n.$ Recall that the number of vertices of $\Delta_n$ is $n+\varphi(n)$ and the number of edges is $\frac{3n\varphi(n)}{2}.$ The degree of every vertex is either $n$ or $2\varphi(n).$ The graph $\Delta_n$ is not complete when $n\ge 3$ as $|\Omega_1|\ge 2.$ The girth of $\Delta_n$ is $3$ whenever $n\ge 2$ as $r,s,rs$ form a triangle, hence $\Delta_n$ is not bipartite. The following observations are also easy to see. 
\begin{enumerate}
 \item The graph $\Delta_n$ is regular if and only if $n=2^a$ for some $a\ge 1.$ 
 \item The graph $\Delta_n$ is Eulerian if and only if $n$ is even.
 \item The graph $\Delta_n$ is  Hamiltonian graph for $n\ge 2.$ We have $\varphi(n)$ rotations and $n$ reflections, and it is clear that for each $0\leq i \leq (n-1)$ there is an edge between and $sr^{i}$ and $sr^{i+1}.$ 
 Let $U(n)= \{a_1,a_2,\dots,a_{\varphi(n)}\}$ such that $a_1<a_2<\dots<a_{\varphi(n)}.$ Then $s-r^{a_1}-sr^{a_1}-sr^{a_1+1}-\dots-sr^{a_2-1}-r^{a_2}-sr^{a_2}-r^{a_2+1}-\dots -sr^{a_3-1}-r^{a_3}-sr^{a_3}-r^{a_3+1}-\dots-sr^{a_{\varphi(n)}-1}-r^{a_{\varphi(n)}}-sr^{a_{\varphi(n)}}-s$ is a  Hamiltonian path in $D_n.$
 \item It is easy to verify that $\Delta_n$ contains cycles starting at $s$  of all lengths from $3$ to $n+\varphi(n).$  
 \item  The graph $\Delta_n$ is planar if and only if $n\in\{1,2,3,4,6\}.$ This has been given in the paper~\cite{planar}. Here, we are giving a proof for the sake of completeness. It is easy to see that $D_4$ and $D_6$ are planar. If $n=5$ or $n\ge 7$, then $\varphi(n)\ge 4$  and one can see that $K_{3,3}$ as a subgraph in $\Delta_n$ as every element of $\Omega_1$ is adjacent to every   element of $\Omega_2.$ 
    \item The domination number of $\Delta_n$ is $2$ when $n(\neq 1)$ is not a prime. Otherwise, the domination number of $\Delta_n$ is 1. However the total domination number, denoted by $\delta_{t}(n)$ is 2 for any $n\geq 2$. Let us prove the argument. 
   First we assume that $n$ is not a prime, we can choose the set $\{r,s\}$ as a dominating set since any vertex in the set $S(D_n)\setminus \{r,s\}$ is either adjacent to $r$ or $s$ and it is clearly of the least cardinality. Moreover, the set $\{r,s\}$ can be seen as the total dominating set since $r$ and $s$ are also adjacent. For the case when $n$ is prime, $\Omega_{2}$ induced a clique of size $n$. Thus the set $\{s\}$ can be seen as the least dominating set, however $\delta_{t}(n)=2$ since the graphs are simple.
\item The clique number of $\Delta_n$ is equal to $p+1$ where $p$ is the least prime divisor of $n$. Let us see how it follows. Note that no two rotations are adjacent, thus finding the maximal size clique inside the subgraph $\Ga_{\Om_2}$ will be sufficient for us to prove the argument. First notice that $\Ga_{\Om_2}$ is a complete graph if and only if $n$ is prime. Then $\Omega_2\cup\{r\}$ forms a maximum size clique when $n$ is a prime. Thus, the clique number is $p+1$ in this case. However, the case when $n$ is not a prime, we can easily check that the set $\mathcal{W}=\{s, sr, sr^2, \ldots, sr^{p-1}\}$ forms a clique of size $p,$ where $p$ is the least prime divisor of $n.$ Now if we choose any $sr^j\in \Om_2 \setminus \mathcal{W},$ then obviously $j\geq p$ and we can write $j=tp+l$ for some $t\in \N$ and $0\leq l <p.$ Then for each $l,$ we have $sr^l\in \mathcal{W}$ and it satisfies $\gcd(tp+l-l,n)\neq 1.$ Therefore, it is easy to see that $\mathcal{W}$ induced a maximal size clique in $\Ga_{\Om_2}.$ Hence the set $\mathcal{W}\cup\{r\}$ is the maximal size clique in $\Delta_n.$ Thus, the clique number is $p+1.$
    \item The chromatic number is equal to clique number since $D_{n}$ has fitting height at most 2 (see Example 3.6 in \cite{conradsubgroup2}). The result follows from the Proposition~2.5 of \cite{lucchini2009clique}.
    \item Let  $n=p_1^{e_1}\ldots p_k^{e_k}$ where $p_i's$ are distinct primes and each $e_i\in \N.$ Suppose $p_1<p_2<\ldots< p_k.$ If $k>1,$ then the independence number of the graph $\Delta_n$ is $\frac{n}{p_1}.$  However when $k=1,$ it is exactly equal to $\varphi(n).$ 
    Let us prove the statement. First note that any independent set of $\Delta_n$ is either a subset of $\Omega_1$ or $\Om_2.$ It is clear that $\Om_1$ is itself an independent set of size $\varphi(n)$ in $\Delta_n.$
    Now we need to figure out the maximum size independent set in $\Delta_n$. We observed that corresponding to each prime divisor $p_i$ of $n$, the set $\mathcal{X}_i=\{s,sr^{p_i},sr^{2p_i},\ldots,sr^{(\frac{n}{p_i}-1)p_i}\}$ forms an independent set of size $\frac{n}{p_i}.$ Moreover if $p_1$ is the least prime, then $|\mathcal{X}_1|>|\mathcal{X}_j|$ for all $j>1.$ Now we claim that $\mathcal{X}_1$ is of maximal size independent set in $\Ga_{\Om_2}$. Let $sr^l\in \Om_2\setminus \mathcal{X}_1,$ then $l=tp_1+a$ with $\gcd(a,p_1)=1$ where $0\leq a\leq p_1-1$ and $0\leq t\leq \frac{n}{p_1}-1.$ Thus, $\gcd(a,n)=1$ and it implies that $(sr^{tp_1}, sr^l)$ is an adjacent pair in $\Delta_n,$ and hence cannot be added to the set $\mathcal{X}_{1}.$ Therefore, it is of maximal size independent set inside $\Om_2.$ So we have $\mathcal{X}_1$ and $\Om_1$ as the maximal size independent sets in $\Om_2$ and $\Om_1$ respectively. However our aim is to find which one is larger concerning given $n$. Note that when $k>1,$ $\varphi(n)\leq \frac{n}{p_1}$. This completes the proof.
\end{enumerate}
From the above discussion, the reader must have understood the structural properties of the graph $\Gamma_n.$ Now, one can see more such graph theoretical properties.
\section{Spectrum of the generating graph of $D_n$}\label{sec4:spec}
In this section, we will find the spectrum and energy of the adjacency and the Laplacian matrix of $\Ga_n$.  Further we denote $AE(n)$ and $LE(n)$ as the adjacency and Laplacian energy. Our aim is to find the spectrum of the adjacency matrix which we will discuss in  Subsection~\ref{sec4:AM}. Finding the spectrum of $L(n)$ is easy to compute once we sort out the spectrum of $A(n)$, so we will cover it in the last Subsection~\ref{sec4:SLM}.
\subsection{Spectrum of the Adjacency Matrix}\label{sec4:AM}
In this section, our goal is to find the spectrum of the adjacency matrix of $\Ga_n.$ We use some binary graph operations to give a nice form to the graph $\Ga_n.$ Moreover, we show that to find the spectrum of the adjacency matrix of $\Ga_n,$ it is sufficient to find the spectrum of the adjacency of the induced subgraph $\Ga_{\Om_2}.$ So, before diving into that we begin our section with some preliminary concepts of graph operations which we will use throughout this section. \par 
The {\em union} of two graphs $\mathcal{G}_1$ and $\mathcal G_2$, denoted by $\mathcal{G}_1\cup \mathcal{G}_2$ is the graph whose vertex set is $V(\mathcal{G}_1)\cup V(\mathcal{G}_2)$ and the edge set is $E(\mathcal{G}_1)\cup E(\mathcal{G}_2).$ The {\em join} of $\mathcal G_1$ and $\mathcal G_2$, denoted by $\mathcal{G}_1\vee \mathcal{G}_2$ is the graph obtained from $\mathcal{G}_1\cup \mathcal{G}_2$ by adding all the possible edges from the vertices of $\mathcal{G}_1$ to those of $\mathcal{G}_2.$ To understand these operations see the example below.
\vglue 1mm
\begin{table}[!ht]
\centering
 \begin{tabular}{|c|c|c|c|}
 \hline
\quad\quad $\mathcal{G}_1$ &\quad \quad$\mathcal{G}_2$ & \quad\quad$\mathcal{G}_1\cup \mathcal{G}_2$ &\quad \quad $\mathcal{G}_1\vee \mathcal{G}_2$\\
 \hline
\begin{tikzpicture}
    \vertex (1) at (1,2) {1};
    \vertex (2) at (2,2) {2};
    \path[-]
    (1) edge (2);
\end{tikzpicture} & 
 \begin{tikzpicture}
    \vertex (1) at (2,2) {3};
    \vertex (2) at (3,3) {4};
    \vertex (3) at (4,2) {5};
    \path[-]
    (1) edge (2)
    (2) edge (3)
    (3) edge (1);
\end{tikzpicture} & 
\begin{tikzpicture}
    \vertex (1) at (2,2) {1};
    \vertex (2) at (4,2) {2};
    \path[-]
    (2) edge (1);
\end{tikzpicture} 
 \begin{tikzpicture}
    \vertex (1) at (2,3) {3};
    \vertex (2) at (3,4) {4};
    \vertex (3) at (4,3) {5};
    \path[-]
    (1) edge (2)
    (3) edge (1)
    (3) edge (2);
\end{tikzpicture} &
\begin{tikzpicture}
    \vertex (1) at (2,2) {1};
    \vertex (2) at (4,2) {2};
    \vertex (3) at (2,3) {3};
    \vertex (4) at (3,4) {4};
    \vertex (5) at (4,3) {5};
    \path[-]
    (1) edge (2)
    (1) edge (3)
    (2) edge (3)
    (4) edge (5)
    (1) edge (4)
    (1) edge (5)
    (2) edge (4)
    (2) edge (5)
    (3) edge (5)
    (3) edge (4);
\end{tikzpicture}\\
\hline
\end{tabular}
\vglue 1mm
\caption{{Union and Join of graphs $\mathcal{G}_1$ and $\mathcal{G}_2.$}}\label{tab:oper1}
\end{table}
Let $A(\Gl)$ denotes the adjacency matrix, and let $L(\Gl)$ denotes the Laplacian matrix of a graph $\Gl.$ It is easy to see that the eigenvalues of the adjacency matrix of $\mathcal{G}_1\cup \mathcal{G}_2$ are the union of the eigenvalues of $A(\mathcal{G}_1)$ and  $A(\mathcal{G}_2).$ Similar holds for Laplacian in the case of the union of two graphs. We further assume that $\mathcal{G}_1$ is $r_1$-regular and $\mathcal{G}_2$ is $r_2$-regular. Let $\lambda_{1} \geq \lambda_{2} \geq \ldots \geq \lambda_{n_{1}}$ and $\ga_{1} \geq \ga_{2} \geq \ldots \geq \ga_{n_{2}}$ be the eigenvalues of $A(\mathcal{G}_1)$ and  $A(\mathcal{G}_2)$ respectively. Note that $\lambda_1=r_1$ and $\ga_1=r_2.$ Let $0=\la_1^L\leq\la_2^L\leq\ldots\leq\la_{n_1}^L$ and $0=\ga_1^L\leq \ga_2^L\leq \ldots\leq\ga_{n_2}^L$
 be the eigenvalues of $L(\mathcal{G}_1)$ and  $L(\mathcal{G}_2)$ respectively. The Laplacian and adjacency eigenvalues of the graph $\mathcal{G}_{1} \vee \mathcal{G}_{2}$ are given in the below table. \par
 The following table summarizes eigenvalues of both the matrices of the graphs $\mathcal{G}_{1} \cup \mathcal{G}_{2}$ and $\mathcal{G}_{1} \vee \mathcal{G}_{2}.$ Reader can refer to~\cite{spectra} for the detailed explanation.
\begin{table}[!ht]\label{Tspectra}
\centering
 \begin{tabular}{|c|c|c|}
 \hline
 Graphs & Eigenvalues of Adjacency &  Eigenvalues of Laplacian\\
 \hline
 $\mathcal{G}_{1}$ & $\lambda_{i}$ &   $\lambda_i^L$\\
 \hline
 $\mathcal{G}_{2}$ & $\ga_{j}$ &  $\ga_j^L$\\
 \hline
 $\mathcal{G}_{1} \cup \mathcal{G}_{2}$ & $\lambda_{i}$ &  $\lambda_i^L$\\
  & $\ga_{j}$ &  $\ga_j^L$\\
 \hline
 &&\\
 $\mathcal{G}_{1} \vee \mathcal{G}_{2}$ & $\lambda_{i};\;2\le i\le n_1$ & $\lambda_i^L+n_2;\,2\leq i\leq n_1$ \\
 & $\ga_{j};\;2\le j\le n_2$ &  $\ga_j^L+n_1;\,2\leq j\leq n_2$ \\
  & $\frac {r_{1}+r_{2}\pm \sqrt{(r_{1}-r_{2})^{2}+4n_{1}n_{2}}}{2}$  &  $0,\,n_1+n_2$\\
  &&\\
 \hline
  \end{tabular}
\caption{Spectral properties of the graphs obtained by the union and join of disjoint graphs.}
\label{tab:2}
\end{table}
\newpage
From the above discussion on graph operations and the spectrum of the associated graphs, the generating graph $\Ga_n$ can easily be seen as
\begin{equation}\label{Eq:exp}
\Ga_n=(\Ga_{\Om_1}\vee \Ga_{\Om_2})\cup \Ga_{\Om_3}.
\end{equation}
Note that $\Ga_{\Om_2}$ is a $\varphi(n)$-regular graph and $\Ga_{\Om_1},$ $\Ga_{\Om_3}$ are the empty graphs. Let us relabel the vertices of $\Ga_n$ in the rows and columns of $A(n)$ as per the expression $\Ga_n=(\Ga_{\Om_2}\vee \Ga_{\Om_1})\cup \Ga_{\Om_3}$ we have
$$A(n)=\begin{tabular}{c|ccc}
         & $\Om_2$ & $\Om_1$ & $\Om_3$\\
         \hline
         $\Om_2$ & $A_{11}$ & $A_{12}$ & $A_{13}$\\
         $\Om_1$ & $A_{21}$ & $A_{22}$ & $A_{23}$\\
         $\Om_3$ & $A_{31}$ & $A_{32}$ & $A_{33}$\\
       \end{tabular},$$ where $A_{11},A_{22}, A_{33}$ are the adjacency matrices of $\Ga_{\Om_2}, \Ga_{\Om_1}, \Ga_{\Om_3}$ respectively.
Let $O_{m\times n}$ and $J_{m\times n}$ be the matrices of size  $m\times n$ with all entries $0$ and $1$ respectively. Then from  Equation~\ref{eq:nbd}, we have
$$A(n)=\begin{bmatrix}
        A_{11} & J_{n\times \varphi(n)} & O_{n\times (n-\varphi(n))}\\
        J_{\varphi(n)\times n } & O_{\varphi(n)\times \varphi(n)} & O_{\varphi(n)\times (n-\varphi(n))}\\
        O_{(n-\varphi(n))\times n} & O_{(n-\varphi(n))\times\varphi(n)} & O_{(n-\varphi(n))\times (n-\varphi(n))}
       \end{bmatrix}.$$\\
Clearly, the rank of the matrix $A(n)$ is 1+rank of $A_{11}.$ The number of nonzero eigenvalues of $A(n)$ is equal to the rank of $A(n)$ since $A(n)$ is a symmetric matrix.  In fact, the nonzero eigenvalues of $A(n)$ are exactly  $\frac {\varphi(n)\pm \sqrt{\varphi(n)^2+4n\varphi(n)}}{2}$ and nonzero values among $\la_2,\la_3,\ldots,\la_n,$ where $\varphi(n)=\lambda_{1} \geq \lambda_{2} \geq \ldots \geq \lambda_{n}$ are eigenvalues of $\Ga_{\Om_2}$ (or $A_{11}$).
Now our goal is to find $\la_2,\la_3,\ldots,\la_n.$ For that, we define an {equivalence} relation on $\Om_2$ as $sr^i\sim sr^j$ if $B_i=B_j.$ It is easy to see that the equivalence class of $sr^i,$ where $i\in \{0,1,2,\ldots,n-1\}$ denoted by $[sr^i],$ is as follows $$[sr^i]=\{sr^i,sr^{i+n_0},sr^{i+2n_0},\ldots, sr^{i+(\frac{n}{n_0}-1)n_0}\}.$$ In particular, $$[s]=  \{s,sr^{n_0},sr^{2n_0},\ldots, sr^{(\frac{n}{n_0}-1)n_0}\}.$$ 
Since equivalence classes are either identical or disjoint, therefore we have $n_0$ disjoint equivalence classes\\ $\{[s],[sr],[sr^2],\ldots,[sr^{n_0-1}]\}$. Also, it is clear that each element of 
$[sr^i]$ has same neighbourhood set $B_i$ in the graph $\Ga_{\Om_2}.$ Thus, we can conclude that the rank of $A_{11}$ is $n_0$ and the rank of $A(n)$ is $n_0+1.$ Note that $|B_i|=\varphi(n)$ for every $i$ and by the definition of the relation $\sim$ one can easily see that each $B_i$ is a disjoint union of some equivalence classes of $\Om_2.$ Since each class has size $\frac{n}{n_0}$, thus there are exactly $\varphi(n_0)$ classes inside each $B_i.$ However, we need to figure out which are those.\par
Recall that corresponding to each $sr^i\in \Om_2$, we have defined $B_i=\{sr^{j}\in \Om_2:j=i+a, \text { where } a\in U(n)\}.$ We know that $\gcd(n,i-j)=1$ if and only if $\gcd(n_0,i-j)=1$. Using these facts, for each $i$ where $0\leq i\leq (n_0-1)$, the set $B_i$ can be seen as
\begin{equation}\label{sec4:Bi} B_i=\bigcup\limits_{\substack{1\leq j\leq n_0\\ \gcd(j,n_0)=1}}[sr^{i+j}].
\end{equation}\par
The above discussion help us to see the adjacency matrix of $\Ga_{\Om_2}$ concerning the defined equivalence classes in a very compact form. So we relabel the vertices in rows and columns as per the set of classes $\{[s],[sr],[sr^2],\ldots,[sr^{n_0-1}]\},$ and then we can construct $A_{11}$ as 

$$A_{11}=\begin{tabular}{c|cccc}
         & $[s]$ & $[sr]$& $\ldots$& $[sr^{n_0-1}]$\\
         \hline
         
         $[s]$ &$B_{11}$&$B_{12}$&$\ldots$& $B_{1n_0}$\\
         
         $[sr]$  &$B_{21}$&$B_{22}$&$\ldots$&$B_{2n_0}$\\
         
         $\vdots$  &$\vdots$&$\vdots$&$\ddots$&$\vdots$\\
         
         $[sr^{{n_0}-1}]$ & $B_{{n_0}1}$&$B_{{n_0}2}$&$\ldots$&$B_{{n_0}n_0}$\\ \\
       \end{tabular},$$
where $B_{ij}$ is given by
\begin{center}
$B_{ij}=\begin{dcases}
 		J_{\frac{n}{n_0}\times \frac{n}{n_0}}& \quad sr^i\in B_j;\\
 	     O_{\frac{n}{n_0}\times \frac{n}{n_0}}&\quad\text{otherwise.}\\
 		\end{dcases}$
   \end{center}
Note that these graphs do not contain self loops, thus for each $i$, the equivalence class $[sr^i]$ is not inside its neighbourhood set $B_i.$ Thus, $B_{ii}=0.$ When $i\ne j,$ $B_{ij}$ is $J_{\frac{n}{n_0}\times \frac{n}{n_0}}$ or $O_{\frac{n}{n_0}\times \frac{n}{n_0}}$ depending on the corresponding classes $[sr^i]$ and $[sr^j].$ Let us determine the structure of $A_{11}$ in the following examples.
 \begin{ex}
      Let $n=p,$ where $p$ is a prime. Then the equivalence classes of $\Om_2$ are  $[sr^i]=\{sr^i\}$ where $0\leq i\leq p-1.$ Thus we have
      \begin{center}
 		$A_{11}=J-I,$
 	\end{center}
 where $J-I$ is the block of order $p\times p.$ Clearly, $A_{11}$ is the adjacency matrix of the complete graph $K_p.$
     \end{ex}
     \begin{ex}
     Let $n=2^{\alpha},$ where $\alpha\in \N.$ The set of disjoint equivalence classes of $\Om_2$ is $\{[s],$ $[sr]\}.$ Note that $[s]=B_0$ and $[sr]=B_1$. Then the adjacency matrix is given by 
     \begin{center}
$A_{11}=~\left[\begin{matrix}0&J\\J&0\end{matrix}\right],$
 	\end{center}
  where $J$ and $0$  are the blocks of order $2^{\al-1}\times 2^{\al-1}.$ Clearly, $A_{11}$ is the adjacency matrix of the complete bipartite graph $K_{2^{\al-1},2^{\al-1}}$
    \end{ex}
 Now we will prove that the equivalence relation $\sim$ on $\Om_2$ gives an equitable partition; for that, let us see an overview of the concept of equitable partition.
    Let $\mathcal{G}$ be a graph with the vertex set $V$, and the edge set $E$. Let $\Pi=\{U_{1},U_{2},\ldots,U_{t}\}$ be a partition of $V$. We call each $U_i$ a cell of $\Pi$, and we denote $\deg(x,U_j)=$ the number of vertices in the cell $U_j$ adjacent to the vertex $x.$ We say $\Pi$ is an equitable partition of $V$ when for each pair of cells $(U_{i},U_{j})$(not necessarily distinct) $\deg(x,U_{j})=\deg(y,U_{j})=b_{ij},$ where $b_{ij}$ is a constant for every $x,y\in U_{i}.$

 The graph with $t$ cells of $\Pi$ as its vertices and $b_{ij}$ arcs from the $ith$ to the $jth$ cells of $\Pi$ are called the quotient of $\mathcal{G}$ over $\Pi$, and it is denoted by $\mathcal{G}/\Pi.$ Thus, the adjacency matrix of the quotient graph is given by $$A(\mathcal{G}/\Pi)=[b_{ij}]$$ where each $b_{ij}$ corresponds to the cells $U_i$ and $U_j.$ Now let us recall a result which is a consequence of equitable partitioning. To explore more about equitable partition one can refer to~\cite{godsil}.
\begin{theorem}[Godsil and Royle,~\cite{godsil}]\label{factor}
  If $\Pi$ is an equitable partition of a graph $\mathcal{G},$ then the characteristic polynomial of $A(\mathcal{G}/\Pi)$ divides the characteristic polynomial of $A(\mathcal{G}).$
\end{theorem}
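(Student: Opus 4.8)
The plan is to encode the partition by an indicator matrix and to recognise the equitable condition as a single matrix identity exhibiting an $A(\mathcal{G})$-invariant subspace. Writing $n=|V|$, I would let $S$ be the $n\times t$ characteristic matrix of $\Pi$, whose $j$-th column is the indicator vector of the cell $U_j$, so that $S_{xj}=1$ when $x\in U_j$ and $S_{xj}=0$ otherwise. Because the cells are nonempty and pairwise disjoint, the columns of $S$ have disjoint supports, hence are linearly independent, and $S$ has full column rank $t$; note also that $S^{\top}S=\mathrm{diag}(|U_1|,\ldots,|U_t|)$ is invertible, a fact I may want if I pass to an orthonormal version of $S$.

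The key step, and the one place where the equitable hypothesis is actually used, is the identity $A(\mathcal{G})\,S=S\,A(\mathcal{G}/\Pi)$. To verify it I would compare entries: for $x\in U_i$, the $(x,j)$ entry of $A(\mathcal{G})S$ is the number of neighbours of $x$ lying in $U_j$, namely $\deg(x,U_j)=b_{ij}$, which is constant over $x\in U_i$ precisely because $\Pi$ is equitable; on the other side, the $(x,j)$ entry of $S\,A(\mathcal{G}/\Pi)$ is also $b_{ij}$, since $x$ belongs to the single cell $U_i$. This identity says exactly that $A(\mathcal{G})$ maps the column space $\mathcal{C}$ of $S$ into itself, i.e. $\mathcal{C}$ is an $A(\mathcal{G})$-invariant subspace of dimension $t$ carrying $A(\mathcal{G}/\Pi)$ as the matrix of the restricted map in the basis of indicator vectors.

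To finish I would extend the columns of $S$ to a basis of $\R^n$, forming an invertible matrix $T=[\,S\mid S'\,]$. The invariance of $\mathcal{C}$ forces the lower-left block of $T^{-1}A(\mathcal{G})T$ to vanish, while the identity above pins the top-left $t\times t$ block to be $A(\mathcal{G}/\Pi)$; thus $T^{-1}A(\mathcal{G})T$ is block upper triangular, and taking determinants gives $\det(xI_n-A(\mathcal{G}))=\det(xI_t-A(\mathcal{G}/\Pi))\cdot\det(xI_{n-t}-C)$ for the bottom-right block $C$, which is precisely the claimed divisibility. The only genuinely substantive point in the argument is the matrix identity of the previous paragraph; once it holds, the factorisation—and hence the divisibility—is automatic, so the main obstacle is merely the bookkeeping check that the equitable constants $b_{ij}$ are exactly the entries of the quotient matrix $A(\mathcal{G}/\Pi)$. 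I note that, since $A(\mathcal{G})$ is symmetric, one could instead orthonormalise $S$ and invoke the invariance of $\mathcal{C}^{\perp}$ to obtain a full block-diagonalisation, but the upper-triangular form already suffices and requires no symmetry.
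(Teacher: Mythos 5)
Your proposal is correct. Note that the paper does not prove this statement at all: it is quoted as a known result from the cited reference (Godsil and Royle, \emph{Algebraic Graph Theory}), so there is no in-paper argument to compare against. Your argument --- the characteristic matrix $S$ of the partition, the identity $A(\mathcal{G})S = S\,A(\mathcal{G}/\Pi)$ as the encoding of equitability, and the block upper-triangularization $T^{-1}A(\mathcal{G})T$ yielding the factorization of the characteristic polynomial --- is precisely the standard proof given in that reference, with the entrywise verification of $AS=SB$ and the full-column-rank observation both handled correctly.
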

Using the concept of equitable partition on $\Ga_{\Om_2},$ we have the following result.
\begin{theorem}
    Let $\Ga_{\Om_2}$ be the subgraph of $\Ga_n$ induced by $\Om_2.$ Then the relation $\sim$ on $\Om_2$ gives an equitable partition. 
\end{theorem}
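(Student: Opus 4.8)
The plan is to verify the defining condition of an equitable partition directly, by exploiting the fact already recorded above that every element of a given class $[sr^i]$ shares the \emph{same} neighbourhood $B_i$ inside $\Ga_{\Om_2}.$ Recall the partition $\Pi=\{[s],[sr],\ldots,[sr^{n_0-1}]\}$ of $\Om_2$ into $n_0$ cells, each of size $\frac{n}{n_0},$ where $sr^i\sim sr^j$ precisely when $B_i=B_j.$ To apply the definition, I must show that for each ordered pair of cells the number of neighbours a vertex of the first cell has in the second cell is independent of the chosen vertex.

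First I would fix two cells $U_a=[sr^a]$ and $U_b=[sr^b]$ (not necessarily distinct) and take any two vertices $x,y\in U_a.$ Since $x\sim y$ by construction, their neighbourhoods in $\Ga_{\Om_2}$ coincide and equal $B_a,$ that is $N(x)=N(y)=B_a.$ Consequently $\deg(x,U_b)=|B_a\cap U_b|=\deg(y,U_b),$ so the count does not depend on the representative of $U_a.$ Setting $b_{ab}:=|B_a\cap U_b|$ then furnishes exactly the constant demanded in the definition, and the partition is equitable. To make the quotient explicit I would then invoke Equation~\eqref{sec4:Bi}, which exhibits $B_a$ as a disjoint union of whole equivalence classes; hence $B_a\cap U_b$ is either all of $U_b$ or empty, giving
\[
b_{ab}=\begin{cases}\frac{n}{n_0}& \text{if } \gcd(a-b,n_0)=1,\\ 0 & \text{otherwise},\end{cases}
\]
with $b_{aa}=0$ since no class lies in its own neighbourhood.

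There is no genuine obstacle: the whole argument collapses to the single observation that membership in a common class forces identical neighbourhoods, after which constancy of $\deg(x,U_b)$ is immediate. The one point that deserves a line of justification—and which I regard as the crux underlying everything—is that $B_i$ depends on $i$ only through $i \bmod n_0,$ equivalently that $\gcd(m,n)=1\iff\gcd(m,n_0)=1$ because $n$ and its squarefree part $n_0$ have the same set of prime divisors. This is precisely what guarantees both that the classes $[sr^i]$ are well defined of common size $\frac{n}{n_0}$ and that each $B_a$ decomposes into full classes as used above.
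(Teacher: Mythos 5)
Your proposal is correct and follows essentially the same route as the paper: both rest on the facts that vertices in a common class $[sr^a]$ share the identical neighbourhood $B_a$, that each $B_a$ is a disjoint union of whole classes by Equation~\eqref{sec4:Bi}, and that every class has size $\frac{n}{n_0}$, yielding cell-degrees $b_{ab}\in\{0,\frac{n}{n_0}\}$. The only (cosmetic) difference is that you first deduce constancy of $\deg(x,U_b)=|B_a\cap U_b|$ abstractly from $N(x)=N(y)=B_a$ and only then compute the constants, whereas the paper computes the constants directly.
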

 \begin{proof}
Let $\Pi=\{[s],[sr],[sr^2],\ldots,[sr^{n_0-1}]\}$ be a partition of $\Om_2.$ We discussed earlier that no two members in a class are adjacent. However, each $x\in [sr^i]$ is adjacent to the elements of $B_i,$ and each $B_i$ can be seen as the union of some equivalence classes of $\Om_2$.
 Since $|[sr^i]|=\frac{n}{n_0}$ for every $i,$ thus for any $x,y\in [sr^i]$
 $$\deg(x,[sr^j])=\deg(y,[sr^j])=\frac{n}{n_0}$$ for every cell $[sr^j]\subseteq B_i.$ For the rest of the cells $[sr^j],$ which are not in $B_i$
$$\deg(x,[sr^j])=\deg(y,[sr^j])=0.$$ Thus, $\Pi$ is an equitable partition of the graph $\Ga_{\Om_2}$.

 \end{proof}
 Now we will try to figure out the adjacency matrix of the quotient graph $\Ga_{\Om_2}/\Pi.$ Let ${\widetilde{A}_{11}}=A(\Ga_{\Om_2}/\Pi)$. Then finding the eigenvalues of ${\widetilde{A}_{11}}$ will be sufficient to find the whole spectrum of $\Ga_{\Om_2}.$ 
 Using the expression of $B_i$ given in Equation~\ref{sec4:Bi}, we can write ${\widetilde{A}_{11}}$ in the following form
\begin{equation}\label{quotient}
{\widetilde{A}_{11}}=\begin{tabular}{c|cccc}
         & $[s]$ & $[sr]$& $\ldots$& $[sr^{{n_0}-1}]$\\
         \hline
         $[s]$ &$a_0$&$a_1$&$\ldots$& $a_{{n_0}-1}$\\
         $[sr]$  &$a_{{n_0}-1}$&$a_0$&$\ldots$&$a_{{n_0}-2}$\\
         $\vdots$  &$\vdots$&$\vdots$&$\ddots$&$\vdots$\\
         $[sr^{{n_0}-1}]$ & $a_1$&$a_2$&$\ldots$&$a_0$\\ \\
       \end{tabular}
       \end{equation} 
       where for $0\leq i\leq ({n_0}-1),$ $a_i$ is given by 
$$a_i=\begin{dcases}
 		\frac{n}{n_0} & \quad \text{if }\gcd(i,n_0)=1;\\
 	     0&\quad\text{otherwise.}
 		\end{dcases}$$
Now one can observe that the matrix ${\widetilde{A}_{11}}$ is a circulant matrix with the first row vector $$[a_0\quad a_1\quad \ldots\quad a_{{n_0}-1}],$$ and the associated polynomial $q_{n}(t)$ is given by $$q_{n}(t)=\sum\limits_{i=0}^{n_0-1} a_it^i=\left(\frac{n}{n_0}\right)\sum\limits_{\substack{0\leq i\leq ({n_0}-1)\\ \gcd(i,n_0)=1}}t^i.$$
~\\
Then, we can express  ${\widetilde{A}_{11}}=q_{n}(P),$ where $P$ is a permutation matrix 
 with $P^{n_0}=I$ and $I$ is the identity matrix. 
Thus we can conclude that the eigenvalues of ${\widetilde{A}_{11}}$ are $q_{n}(\zeta_{n_0}^j)=\frac{n}{n_0}c_{n_0}(j)$ where $j\in \{0,1,\ldots,n_0-1\}$ and $\zeta_{n_0}$ is a primitive $n_0$-{th} root of unity. Moreover, we can also write each $c_{n_0}(j)$ in terms of the M\"{o}bius function, denoted by $\mu(\cdot).$ 
We can express the sum of the $j$th power of primitive $n_0$-th roots of unity as follows.
 $$c_{n_0}(j)=\mu(d)\frac{\varphi(n_0)}{\varphi(d)},$$
 where $d=\frac{n_0}{\gcd(n_0,j)}.$ 
 Thus, $$\frac{n}{n_0}c_{n_0}(j)=\mu(d)\frac{\varphi(n)}{\varphi(d)}=(-1)^{k_d}\frac{\varphi(n)}{\varphi(d)},$$
 here we are using the fact that $n_0$ is a square free integer, therefore $\mu(d)=(-1)^{k_d}$ where $k_d$ is the number of distinct prime factors of $d.$ Hence the eigenvalues of ${\widetilde{A}_{11}}$ are all integers. 
 In fact, one can note that there are exactly $\tau(n_0)$ distinct eigenvalues of $\widetilde{A}_{11}.$ Since $0\leq j\leq n_0-1,$ therefore each eigenvalue has multiplicity $\varphi(d).$ Hence $\sum\limits_{d/{n_0}}\varphi(d)=n_0.$ 
Now we can conclude the following results.
 \begin{theorem}
    Let $\Ga_n$ be the generating graph of $D_n.$ Then the characteristic polynomial of ${\widetilde{A}_{11}}$ is given by $$C_{{\widetilde{A}_{11}}}(t)=\prod\limits_{d/n_0}\left(t-(-1)^{k_d}\frac{\varphi(n)}{\varphi(d)}\right)^{\varphi(d)},$$ where $k_d$ is the number of distinct prime factors of $d.$ 
\end{theorem}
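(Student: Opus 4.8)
The plan is to read off the spectrum of $\widetilde{A}_{11}$ from its circulant structure and then bundle the $n_0$ eigenvalues according to the divisor of $n_0$ they induce. First I would invoke the fact, established just above, that $\widetilde{A}_{11}=q_{n}(P)$ with $P^{n_0}=I$, so that $\widetilde{A}_{11}$ is a circulant matrix whose eigenvalues are precisely $q_{n}(\zeta_{n_0}^{j})$ for $0\le j\le n_0-1$, where $\zeta_{n_0}$ is a primitive $n_0$-th root of unity. Consequently $C_{\widetilde{A}_{11}}(t)=\prod_{j=0}^{n_0-1}\left(t-q_{n}(\zeta_{n_0}^{j})\right)$, and the whole task reduces to identifying these $n_0$ roots together with their repetitions.

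Next I would rewrite each root. By the computation already carried out, $q_{n}(\zeta_{n_0}^{j})=\frac{n}{n_0}c_{n_0}(j)$, and setting $d=\frac{n_0}{\gcd(n_0,j)}$ the Ramanujan-sum identity $c_{n_0}(j)=\mu(d)\frac{\varphi(n_0)}{\varphi(d)}$ combined with the square-free identity $\frac{n}{n_0}\varphi(n_0)=\varphi(n)$ yields $q_{n}(\zeta_{n_0}^{j})=(-1)^{k_d}\frac{\varphi(n)}{\varphi(d)}$, since $d\mid n_0$ is square-free and hence $\mu(d)=(-1)^{k_d}$. Thus each root depends on $j$ only through the divisor $d=\frac{n_0}{\gcd(n_0,j)}$ of $n_0$, and the distinct roots are indexed by the $\tau(n_0)$ divisors of $n_0$.

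The heart of the argument, and the step I expect to require the most care, is the multiplicity count: for a fixed divisor $d\mid n_0$ I must show that exactly $\varphi(d)$ of the indices $j\in\{0,1,\ldots,n_0-1\}$ satisfy $\frac{n_0}{\gcd(n_0,j)}=d$. This condition is equivalent to $\gcd(n_0,j)=\frac{n_0}{d}$; writing $j=\frac{n_0}{d}\,j'$ it becomes $0\le j'\le d-1$ with $\gcd(d,j')=1$, which is met by precisely $\varphi(d)$ values of $j'$. Hence the eigenvalue $(-1)^{k_d}\frac{\varphi(n)}{\varphi(d)}$ occurs with multiplicity $\varphi(d)$.

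Finally I would assemble the factorization. Since the divisors $d$ of $n_0$ partition the index set $\{0,\ldots,n_0-1\}$ into the classes just counted, the characteristic polynomial factors as $C_{\widetilde{A}_{11}}(t)=\prod_{d\mid n_0}\left(t-(-1)^{k_d}\frac{\varphi(n)}{\varphi(d)}\right)^{\varphi(d)}$, and the identity $\sum_{d\mid n_0}\varphi(d)=n_0$ confirms that the total degree equals $n_0$, as it must. This closes the proof.
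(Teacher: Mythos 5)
Your proposal is correct and follows essentially the same route as the paper: read the eigenvalues off the circulant structure $\widetilde{A}_{11}=q_n(P)$, identify them as $\frac{n}{n_0}c_{n_0}(j)=(-1)^{k_d}\frac{\varphi(n)}{\varphi(d)}$ via the Ramanujan-sum and M\"obius identities, and group the indices $j$ by the divisor $d=\frac{n_0}{\gcd(n_0,j)}$. Your only addition is that you actually prove the multiplicity count (exactly $\varphi(d)$ indices $j$ give each divisor $d$, via the substitution $j=\frac{n_0}{d}j'$), a step the paper merely asserts, so if anything your write-up is slightly more complete.
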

 \begin{cor}\label{Cor:spec2}
      The characteristic polynomial of the adjacency matrix $A_{11}$ of $\Ga_{\Om_2}$ is given by
      $$C_{A_{11}}(t)=t^{(n-n_0)}C_{\widetilde{A}_{11}}(t),$$ where $C_{\widetilde{A}_{11}}(t)$ is the characteristic polynomial of ${\widetilde{A}_{11}}.$
 \end{cor}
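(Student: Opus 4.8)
The plan is to combine the equitable-partition machinery already developed with two pieces of bookkeeping: a degree count and the exact multiplicity of the eigenvalue $0$. Since we have established that $\Pi=\{[s],[sr],\ldots,[sr^{n_0-1}]\}$ is an equitable partition of $\Ga_{\Om_2}$ with quotient matrix $\widetilde{A}_{11}$, Theorem~\ref{factor} of Godsil and Royle immediately gives that $C_{\widetilde{A}_{11}}(t)$ divides $C_{A_{11}}(t)$. Since $A_{11}$ has order $n$ (it is the adjacency matrix of $\Ga_{\Om_2}$, which has $n$ vertices) and $\widetilde{A}_{11}$ has order $n_0$, we may write $C_{A_{11}}(t)=g(t)\,C_{\widetilde{A}_{11}}(t)$ for a monic polynomial $g$ of degree $n-n_0$, and the whole task reduces to identifying $g(t)=t^{\,n-n_0}$.

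To pin down $g$, I would argue about the root $t=0$ on both sides. First, recall that each class $[sr^i]$ consists of $\frac{n}{n_0}$ vertices sharing the common neighbourhood $B_i$; hence the rows of $A_{11}$ take only $n_0$ distinct values, so $\operatorname{rank}(A_{11})\le n_0$, and we have already noted that in fact $\operatorname{rank}(A_{11})=n_0$. Because $A_{11}$ is a real symmetric matrix, its rank equals its number of nonzero eigenvalues, so $0$ is an eigenvalue of $A_{11}$ of multiplicity exactly $n-n_0$; equivalently $t^{\,n-n_0}$ exactly divides $C_{A_{11}}(t)$. On the other hand, the eigenvalues of $\widetilde{A}_{11}$ were computed to be $\frac{n}{n_0}c_{n_0}(j)=(-1)^{k_d}\frac{\varphi(n)}{\varphi(d)}$, and since $n_0$ is squarefree these are all nonzero; thus $C_{\widetilde{A}_{11}}(0)\neq 0$, i.e. $t\nmid C_{\widetilde{A}_{11}}(t)$. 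Consequently the entire multiplicity of the root $0$ in $C_{A_{11}}(t)$ must be carried by $g$, forcing $g(t)=t^{\,n-n_0}$ and yielding the claimed identity.

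The main obstacle is exactly this last reconciliation: Theorem~\ref{factor} only furnishes divisibility, not the value of the cofactor, so without extra input $g$ could a priori contain spurious nonzero roots. The crux is therefore establishing that $\widetilde{A}_{11}$ contributes no zero eigenvalue, which is where squarefreeness of $n_0$ (via $\mu(d)\neq 0$) is indispensable. A cleaner, self-contained alternative that avoids this balancing act is to notice the Kronecker structure $A_{11}=C\otimes J_{n/n_0}$, where $C$ is the $n_0\times n_0$ $0$--$1$ pattern matrix recording which blocks $B_{ij}$ equal $J$, so that $\widetilde{A}_{11}=\frac{n}{n_0}C$. Since the spectrum of $J_{n/n_0}$ is $\frac{n}{n_0}$ with multiplicity $1$ and $0$ with multiplicity $\frac{n}{n_0}-1$, the product rule for eigenvalues of a Kronecker product gives that the eigenvalues of $A_{11}$ are precisely those of $\widetilde{A}_{11}$ together with $0$ repeated $n-n_0$ times, from which $C_{A_{11}}(t)=t^{\,n-n_0}C_{\widetilde{A}_{11}}(t)$ follows at once.
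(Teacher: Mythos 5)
Your main argument is correct and is, in expanded form, exactly the paper's proof: the paper disposes of Corollary~\ref{Cor:spec2} in one line by noting that $\widetilde{A}_{11}$ is invertible and $A_{11}$ is symmetric and then invoking Theorem~\ref{factor}, and the bookkeeping you supply --- $\operatorname{rank}(A_{11})=n_0$ so that $0$ is an eigenvalue of $A_{11}$ with multiplicity exactly $n-n_0$, while squarefreeness of $n_0$ gives $\mu(d)=(-1)^{k_d}\neq 0$ and hence $C_{\widetilde{A}_{11}}(0)\neq 0$ --- is precisely the content hidden in the words ``invertible'' and ``symmetric''. Your closing Kronecker-product argument, however, is a genuinely different route, and arguably the cleaner one: writing $A_{11}=C\otimes J_{n/n_0}$ with $\widetilde{A}_{11}=\frac{n}{n_0}C$, and using that the spectrum of $J_{n/n_0}$ is $\frac{n}{n_0}$ (once) and $0$ (with multiplicity $\frac{n}{n_0}-1$), the product rule for Kronecker spectra yields at once that the eigenvalues of $A_{11}$ are those of $\widetilde{A}_{11}$ together with $0$ repeated $n_0\left(\frac{n}{n_0}-1\right)=n-n_0$ times. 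This bypasses Theorem~\ref{factor}, the rank computation, and the invertibility of $\widetilde{A}_{11}$ entirely, and it produces multiplicities directly instead of reconciling a divisibility statement with a count of zero eigenvalues --- which, as you rightly point out, is the one delicate step in the first argument, since the quotient could a priori contribute a zero eigenvalue. The paper does exploit Kronecker products, but only in Subsection~\ref{sec4:alter} and for a different purpose (recursively factoring $\widetilde{A}_{11}$ itself into $J-I$ blocks); it never records the factorization $A_{11}=C\otimes J_{n/n_0}$. What the paper's route buys is generality, since equitable partitions apply even without this exact blow-up structure; what your alternative buys is a self-contained proof in which nothing needs to be ruled out.
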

  \begin{proof}
     Note that ${\widetilde{A}_{11}}$ is invertible and $A_{11}$ is symmetric, therefore by using Theorem \ref{factor} we get the required result.
 \end{proof}
 Let us illustrate the above results in the following example.
\begin{ex}\label{n45}
 Let $n=45,$ then $n_0=15$. The subset $\Om_2$ of $V(\Ga_{45})$ is given by $$\Om_2=\{s,sr,\ldots,sr^{44}\}.$$ Note that there are exactly $n_0$ equivalence classes $[sr^i]$ and thus $n_0$ distinct $B_i's$ which varies from $0$ to $n_0-1.$ Each $B_i$ 
 can be expressed as $$B_i=\bigcup\limits_{\substack{1\leq j\leq 15\\ \gcd(j,15)=1}}[sr^{i+j}].$$ In particular, $$B_0=[sr]\cup[sr^2]\cup[sr^4]\cup[sr^7]\cup[sr^8]\cup[sr^{11}]\cup[sr^{13}]\cup [sr^{14}].$$ Since $[s]$ has the neighbourhood set $B_0,$ therefore the class $[s]$ will be adjacent to the classes which are inside $B_0.$ Thus, we get the first row of the matrix ${\widetilde{A}_{11}}$ as follows.
 $$[0\quad 3\quad 3\quad 0\quad 3\quad 0\quad 0\quad 3\quad 3\quad 0\quad 0\quad 3\quad 0\quad 3\quad 3].$$  Hence the eigenvalues of ${\widetilde{A}_{11}}$ are $24,-12,-6,3$ with multiplicity $1,2,4,8$ respectively.
 \end{ex}
 Now we will determine the adjacency energy of $\Ga_n.$ 
\begin{cor}
    For $n\in \N,$ the energy of $A(n)$ is given by $$AE(n)=\varphi(n)(2^k-1)+\sqrt{\varphi(n)^2+4n\varphi(n)}$$ where $k$ is the number of distinct prime divisors of $n.$ 
\end{cor}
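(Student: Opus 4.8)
The plan is to compute the graph energy $AE(n)$ directly as the sum of the absolute values of all eigenvalues of $A(n)$, using the complete eigenvalue list assembled in the preceding subsection. From the decomposition $\Ga_n=(\Ga_{\Om_2}\vee \Ga_{\Om_1})\cup \Ga_{\Om_3}$ together with Table~\ref{tab:2}, the spectrum of $A(n)$ consists of three pieces: the two join eigenvalues $\frac{\varphi(n)\pm\sqrt{\varphi(n)^2+4n\varphi(n)}}{2}$; the eigenvalues $\la_2,\ldots,\la_n$ of $A_{11}$ (the value $\la_1=\varphi(n)$ having been consumed by the join); and a block of zero eigenvalues contributed by the empty graphs $\Ga_{\Om_1}$ and $\Ga_{\Om_3}$. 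Since the zeros do not affect the energy, only the first two pieces need to be evaluated.

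First I would handle the two join eigenvalues. Because $\sqrt{\varphi(n)^2+4n\varphi(n)}>\varphi(n)$, the eigenvalue with the $+$ sign is positive while the one with the $-$ sign is negative, so adding their absolute values cancels the two $\varphi(n)$ terms and leaves exactly $\sqrt{\varphi(n)^2+4n\varphi(n)}$.

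Next I would sum the absolute values of $\la_2,\ldots,\la_n$. By Corollary~\ref{Cor:spec2}, these are the zero eigenvalue with multiplicity $n-n_0$ together with the eigenvalues of $\widetilde{A}_{11}$ other than $\la_1=\varphi(n)$. For each divisor $d\mid n_0$ with $d>1$, the corresponding eigenvalue is $(-1)^{k_d}\frac{\varphi(n)}{\varphi(d)}$, so its absolute value is $\frac{\varphi(n)}{\varphi(d)}$; as its multiplicity is $\varphi(d)$, that divisor contributes exactly $\varphi(n)$. Since $n_0$ is square free with precisely $k$ distinct prime factors, it has $\tau(n_0)=2^k$ divisors, hence $2^k-1$ divisors exceeding $1$, and these contribute a total of $\varphi(n)(2^k-1)$. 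Equivalently, the full sum over all divisors of $n_0$ is $\sum_{d\mid n_0}\varphi(d)\cdot\frac{\varphi(n)}{\varphi(d)}=\varphi(n)\,2^k$, from which one subtracts the $d=1$ term $\varphi(n)$ corresponding to $\la_1$.

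Adding the two contributions yields $AE(n)=\varphi(n)(2^k-1)+\sqrt{\varphi(n)^2+4n\varphi(n)}$. The only point requiring genuine care is the bookkeeping of which copy of $\varphi(n)$ is absorbed by the join: one must verify that the eigenvalue $\la_1=\varphi(n)$ of $A_{11}$ (the $d=1$ term of $\widetilde{A}_{11}$) is exactly the one consumed in forming the two join eigenvalues, so that the remaining $\la_i$ entering the energy sum range only over divisors $d>1$. Once this identification is confirmed, the rest is a routine application of the $\sum_{d\mid n_0}\varphi(d)=n_0$ type of summation, now weighted by the absolute values.
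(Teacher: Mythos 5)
Your proposal is correct and follows essentially the same route as the paper: both assemble the full spectrum of $A(n)$ from the join/union decomposition together with Corollary~\ref{Cor:spec2}, observe that the two join eigenvalues have opposite signs so their absolute values sum to $\sqrt{\varphi(n)^2+4n\varphi(n)}$, and note that each divisor $d>1$ of $n_0$ contributes $\bigl|\pm\varphi(n)/\varphi(d)\bigr|\cdot\varphi(d)=\varphi(n)$, giving $\varphi(n)(2^k-1)$ in total. The only cosmetic difference is your justification that $\lambda_2<0$ (via $\sqrt{\varphi(n)^2+4n\varphi(n)}>\varphi(n)$ rather than the paper's $\lambda_1\lambda_2=-n\varphi(n)<0$), and your explicit bookkeeping of the $d=1$ eigenvalue being the one absorbed by the join, which the paper handles implicitly by starting its sum at $i=2$.
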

\begin{proof}
    Let $1=d_1<d_2<\ldots<d_{2^k}=n_0$ be the divisors of $n_0.$ From the corollary~\ref{Cor:spec2}, the spectrum of $A(n)$ is given by $$\sigma_{A(n)}=
    \begin{pmatrix}
   0&\mu(d_2)\frac{\varphi(n)}{\varphi(d_2)}&
   \mu(d_3)\frac{\varphi(n)}{\varphi(d_3)}&\dots
   &\mu(d_{2^k})\frac{\varphi(n)}{\varphi(d_{2^k})}&\lambda_1&\lambda_2\\
   2n-(n_0+1)&\varphi(d_2)&\varphi(d_3)&\dots&\varphi(d_{2^k})&1&1
\end{pmatrix},
$$
where $\lambda_1=\frac {1}{2}\left(\varphi(n)+\sqrt{\varphi(n)^2+4n\varphi(n)}\right),\,\lambda_2=\frac{1}{2}\left(\varphi(n)- \sqrt{\varphi(n)^2+4n\varphi(n)}\right).$ Since $\lambda_1\lambda_2=-n\varphi(n)<0,$ thus $\lambda_2<0.$ Therefore $|\lambda_1|+|\lambda_2|=|\lambda_1-\lambda_2|=\sqrt{\varphi(n)^2+4n\varphi(n)}.$ Hence 
\begin{equation}\label{eq:AM}
AE(n)=\sum\limits_{i=2}^{\tau(n_0)}\left|(-1)^{k_{d_i}}\frac{\varphi(n)}{\varphi(d_i)}\right|\varphi(d_i)+\sqrt{(\varphi(n))^2+4n\varphi(n)}
\end{equation}
where $k_{d_i}$ represents the number of prime factors of $d_i.$
On simplifying Equation~\ref{eq:AM},  we have
\begin{align*}AE(n)&=\sum\limits_{i=2}^{2^k}\frac{\varphi(n)}{\varphi(d_i)}\varphi(d_i)+\sqrt{(\varphi(n))^2+4n\varphi(n)}\\
&=\varphi(n)(2^k-1)+\sqrt{(\varphi(n))^2+4n\varphi(n)}.
\end{align*}
Hence the result follows.
\end{proof}
For a given $n,$ the following result is concluded on some simple observations which gives a relation between the quotient matrices $A_{11}'$ and $A_{11}$ that corresponds to the graphs $\Ga_{n_0}$ and $\Ga_n$ respectively.  
\begin{theorem}
   For any $n$ such that $n\neq n_0$, if $\la$ is any non zero eigenvalue of the submatrix $A_{11}'$ of $A(n_0)$, then $(\frac{n}{n_0})\la$ is an eigenvalue of the submatrix $A_{11}$ of $A(n)$. In fact, $\la$ and $\frac{n}{n_0}\la$ are of the same multiplicities in their respective matrices. Moreover, $rank\, A(n)= rank\,A(n_0)$ but the nullities of $A(n)$ and $A(n_0)$ differs.
   \end{theorem}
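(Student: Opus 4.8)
The plan is to exploit the fact that $n_0$ is square free, so that the square free part of $n_0$ equals $n_0$ itself. For the group $D_{n_0}$ this forces every equivalence class $[sr^i]$ under $\sim$ to have size $\frac{n_0}{n_0}=1$, i.e. to be a singleton; consequently the quotient of $\Ga_{\Om_2}$ for $D_{n_0}$ coincides with $A_{11}'$ itself. Applying Equation~\ref{quotient} to $n_0$, the matrix $A_{11}'$ is therefore the $n_0\times n_0$ circulant whose first row $[a_0'\ \ldots\ a_{n_0-1}']$ has $a_i'=1$ when $\gcd(i,n_0)=1$ and $a_i'=0$ otherwise.

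Next I would compare this with $\widetilde{A}_{11}$ for $D_n$. By Equation~\ref{quotient} the matrix $\widetilde{A}_{11}$ is the $n_0\times n_0$ circulant with exactly the same support pattern, but with the nonzero entries equal to $\frac{n}{n_0}$ rather than $1$. Hence
$$\widetilde{A}_{11}=\frac{n}{n_0}\,A_{11}'.$$
Both matrices are circulant, hence normal and diagonalizable, and multiplying by the nonzero scalar $\frac{n}{n_0}$ simply scales every eigenvalue by $\frac{n}{n_0}$ while preserving its multiplicity. Thus any eigenvalue $\la$ of $A_{11}'$ of multiplicity $m$ yields the eigenvalue $\frac{n}{n_0}\la$ of $\widetilde{A}_{11}$ with the same multiplicity $m$.

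The remaining step is to transfer this from the quotient matrices to the full matrices. For $D_{n_0}$ there is nothing to do, since $A_{11}'$ equals its own quotient. For $D_n$, Corollary~\ref{Cor:spec2} gives $C_{A_{11}}(t)=t^{\,n-n_0}C_{\widetilde{A}_{11}}(t)$, so the nonzero eigenvalues of $A_{11}$ are precisely the nonzero eigenvalues of $\widetilde{A}_{11}$, counted with the same multiplicities. Because every eigenvalue of $\widetilde{A}_{11}$ equals $\frac{n}{n_0}c_{n_0}(j)=(-1)^{k_d}\frac{\varphi(n)}{\varphi(d)}\neq 0$, no nonzero eigenvalue is absorbed into the factor $t^{\,n-n_0}$; so each nonzero $\la$ of $A_{11}'$ produces a nonzero eigenvalue $\frac{n}{n_0}\la$ of $A_{11}$ of the same multiplicity, which is the first assertion. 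I expect this transfer, namely keeping careful track of multiplicities across Theorem~\ref{factor} and making sure the invertibility of $\widetilde{A}_{11}$ is used, to be the only delicate point; everything else is bookkeeping.

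Finally, for the rank and nullity claims I would count. Since $\widetilde{A}_{11}$ is invertible we have $rank\,A_{11}=n_0$, and as $rank\,A(n)=1+rank\,A_{11}$ this gives $rank\,A(n)=n_0+1$; the identical computation for $n_0$ yields $rank\,A(n_0)=n_0+1$, whence $rank\,A(n)=rank\,A(n_0)$. However $A(n)$ has order $2n$ while $A(n_0)$ has order $2n_0$, so their nullities are $2n-(n_0+1)$ and $2n_0-(n_0+1)=n_0-1$ respectively; since $n\neq n_0$ these differ, completing the proof.
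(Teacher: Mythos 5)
Your proposal is correct and follows essentially the same route as the paper: both rest on the observation that for square-free $n_0$ the equivalence classes are singletons, so $A_{11}'$ equals its own quotient, together with the scaling identity $\widetilde{A}_{11}=\frac{n}{n_0}A_{11}'$, from which the eigenvalue correspondence, equal ranks, and differing nullities follow. Your write-up is in fact more careful than the paper's terse argument, since you make explicit the transfer from $\widetilde{A}_{11}$ to $A_{11}$ via the factorization $C_{A_{11}}(t)=t^{\,n-n_0}C_{\widetilde{A}_{11}}(t)$ and verify that no nonzero eigenvalue is lost, a step the paper leaves implicit.
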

   \begin{proof}
       In the case of $\Ga_{n_0,}$ each class $[sr^i]=\{sr^i\}$ where $0\leq i\leq n_0-1.$ Thus, the submatrix ${\widetilde{A}_{11}}'=A_{11}'$ of $A(n_0)$. However, for $\Ga_n$ where $n\neq n_0,$ the quotient matrix ${\widetilde{A}_{11}}\neq A_{11}.$ In addition, we can easily check that ${\widetilde{A}_{11}}=\frac{n}{n_0}{\widetilde{A}_{11}}'=\frac{n}{n_0}{\widetilde{A}_{11}}$. Therefore, if $\la$ is any non zero eigenvalue of the submatrix $A_{11}'$ of $A(n_0)$, then $(\frac{n}{n_0})\la$ is an eigenvalue of the matrix $A_{11}$ of $A(n)$. Clearly $\la$ and $\frac{n}{n_0}\la$ are of the same multiplicities in their respective matrices. Thus, $rank\, A(n)= rank\,A(n_0).$ However $n\neq n_0,$ therefore the nullities differ.
   \end{proof}
We conclude this section by one more simple observation. In the next subsection, we will give an alternative approach to find the eigenvalues of ${\widetilde{A}_{11}}.$
\begin{rem}
Recall that a connected graph $\Gl$ is primitive if and only if the graph contains cycles of odd lengths. The exponent of the graph is denoted by
$exp(\Gl)$. We have observed that for any $n\in \N,$ the graph $\Delta_n$ is connected and contains cycles of all the odd lengths for $n\geq 2.$ Using the above fact, we conclude that $\Delta_n$ is primitive and $exp(\Delta_n)=2.$
\end{rem}
\subsection{An alternative approach}\label{sec4:alter}
In the previous subsection, for any $n\in \N$ we have computed the eigenvalues of the matrix ${\widetilde{A}_{11}}.$ Now we give an alternative way to see the spectrum of ${\widetilde{A}_{11}}.$
We basically give an algorithm to reshuffle the labeling of rows and columns of ${\widetilde{A}_{11}}$ which help us to find a recurrence relation on the matrix $J-I$ where $J$ is the matrix of all entries 1's and $I$ is the identity matrix of order depending on the prime factors of $n.$ To understand the algorithm we first give an example of a Kronecker product, and then we will compute the spectrum.  \\ Let $A$ and $B$ be the square matrices of the form $J-I,$ which are of order $a\times a$ and $b\times b$ respectively. Then the Kronecker product $A\otimes B$ is of order $ab\times ab$ given by
$${\displaystyle {A} \otimes {B} ={\begin{bmatrix}0&{B} &\cdots &{B} \\
{B} &0&\cdots &B\\\vdots &\vdots&\ddots &\vdots \\{B} &{B}&\cdots &0\end{bmatrix}}}.$$ 
It is known that eigenvalues of the Kronecker product of two square matrices is all the possible product of their eigenvalues. Therefore, the eigenvalues of $A$ are $-1$ and $a$ with multiplicity $(a-1)$ and 1 respectively. Also, the eigenvalues of $B$ are $-1$ and $b$ with multiplicity $(b-1)$ and $1$ respectively. Hence the spectrum of $A\otimes B$ is given by 
$$\sigma=
    \begin{pmatrix}
        1& -a& -b&ab\\
        (a-1)(b-1)&(b-1)&(a-1)&1
    \end{pmatrix}.$$
Now we will find an algorithm which help us to reshuffle the rows and columns in such a way so that we will get ${\widetilde{A}_{11}}$ in the Kronecker product of smaller order matrices.\par
Let $n=p_1^{e_1}p_2^{e_2}\ldots p_k^{e_k},$ where $p_i's$ are the distinct primes and $e_i\in \N.$ Suppose $p_1<p_2<\dots <p_k$ and we have $n_0=p_1p_2\ldots p_k.$ Previously, the matrix ${\widetilde{A}_{11}}$ is labeled in the sequence of cells $[s],[sr],\ldots,[sr^{n_0-1}]$ in which each cell has been treated as a vertex, then we follow the following steps to get the relabeling of ${\widetilde{A}_{11}}.$
\begin{description}
\item[Step 1.] We first divide the set $\{[s],[sr],\ldots,[sr^{n_0-1}]\}$ into $p_1$ subsets of size $\frac{n_0}{p_1}$, which are defined as
$$V_{i_1}=\{[sr^{p_1t+i_1}]:0\leq t\leq \frac{n_0}{p_1}-1\},$$
where $0\leq i_1\leq (p_1-1).$ Note that each $V_{i_1}$ can be seen as an independent set.
\item[Step 2.] The labeling of rows and columns of ${\widetilde{A}_{11}}$ will now follow the sequence $V_0$,$V_1,\ldots, V_{p_1-1}.$ However inside each subset $V_i,$ the labeling will be done by using the next step.
\item[Step 3.] We subdivide each $V_{i_1}$ into $p_k$ subsets, of sizes $\frac{|V_{i_1}|}{p_k}$ and define each subset as
$$V_{i_1 i_2}=\{[sr^l]\in V_{i_1}:l={p_kt+i_2}\text{ and }0\leq t\leq \frac{n_0}{p_1p_k}-1\},$$ where $0\leq i_2\leq p_{k}-1.$
\item[Step 4.] Now the labeling of ${\widetilde{A}_{11}},$ corresponding to  each subsets $V_{i_1}$ will follow the sequence $V_{i_10}$, $V_{i_11},$
\ldots, $V_{i_1p_{k-1}}.$
\item[Step 5.] Again we subdivide each $V_{i_1i_2}$ into $p_{k-1}$ subsets, of sizes $\frac{|V_{i_1 i_2}|}{p_{k-1}}.$ We define each subset as
$$V_{i_1i_2i_3}=\{[sr^l]\in V_{i_1i_2}:l={p_{k-1}t+i_3}\text{ and }0\leq t\leq \frac{n_0}{p_1p_kp_{k-1}}-1\},$$ where $0\leq i_3\leq p_{k-1}-1.$
\item[Step 6.] Now the labeling of ${\widetilde{A}_{11}},\,$ corresponding to each $V_{i_1i_2i_3}$ will follow the sequence $V_{i_1i_2i_30}$,\,$V_{i_1i_2i_31}$,\,\ldots, \,$V_{i_1i_2i_3(p_{k-1}-1)}.$
\item [Step 7.]We will continue dividing each subsequent subsets until we get subsets say $V_{\Lambda}$ of size $p_2,$ where $\Lambda=\{i_1i_2\ldots i_{(k-1)}\}$ is the indexing set.
\item [Step 8.] We label the rows and columns corresponding to each subset $V_\Lambda$ in the sequence $[sr^{p_2t}],$$[sr^{p_2t+1}]$,\,\ldots, $[sr^{p_2t+(p_2-1)}],$ and thus we get the final labeling of rows and columns of $\widetilde{A}_{11}$.
\end{description}
Using the above algorithm, the matrix ${\widetilde{A}_{11}}$ can be seen in the following block form, where in each step  because of the subdivision we will get a block matrix. In that way, we will get $(k-1)$ tuples of block matrices $(A_{p_k},A_{p_{k-1}},\ldots, A_{p_2}),$ and then ${\widetilde{A}_{11}}$ comes in the following form. Here we consider two cases with respect to the number of prime factors of $n$.
\begin{enumerate}
    \item When $k=1$, $n=p_{1}^{e_{1}}.$ Then we have
  $${\widetilde{A}_{11}}=\frac{n}{n_{0}}(J-I),$$ where $J$ and $I$ are the matrices of order $p_{1}\times p_{1}.$
\item When $k>1,$ we get $(k-1)$ tuples of block matrices $(A_{p_{k}},A_{p_{k-1}},\ldots,A_{p_{2}}).$\\  From Step(1), the matrix ${\widetilde{A}_{11}}$ can be seen in the following block form.
$${\widetilde{A}_{11}}=\frac{n}{n_0}\begin{pmatrix}0&A_{p_{k}}&\cdots&A_{p_{k}}\\A_{p_{k}}&0&\cdots&A_{p_{k}}\\ \vdots&\vdots&\ddots&\vdots\\A_{p_{k}}&A_{p_{k}}&\cdots &0\end{pmatrix},$$
 where $A_{p_{k}}$ and $0$ are the blocks of size $\frac {n_{0}}{p_{1}}\times \frac {n_{0}}{p_{1}}.$ Moreover, $A_{p_{k}}$ is repeating $(p_{1}-1)$ times in each row since each block corresponds to a pair of subsets $(V_{i_1},V_{j_1})$ for $i_1\neq j_1$ where $i_1,j_1\in \{0,1,\ldots,p_1-1\}.$ 
 
 From Step(2), we get a block $A_{p_{k-1}}$ inside each matrix $A_{p_{k}},$ which is repeating $(p_{k}-1)$ times in each row. Each $A_{p_{k-1}}$ corresponds to a pair of subsets $(V_{i_1i_2},V_{j_1j_2})$ for $i_2\neq j_2$ and $i_2,j_2\in \{0,1,\ldots,p_k-1\}.$ 
 Note that when $i_2=j_2$, the block corresponding to $(V_{i_1i_2},V_{j_1j_2})$ is the zero matrix.
 Thus, $A_{p_k}$ is in the following form.
 $$A_{p_{k}}=\begin{pmatrix}0&A_{p_{k-1}}&\cdots&A_{p_{k-1}}\\A_{p_{k-1}}&0&\cdots&A_{p_{k-1}}\\ \vdots&\vdots&\ddots&\vdots\\A_{p_{k-1}}&A_{p_{k-1}}&\cdots &0\end{pmatrix},$$
 From Step(3), we get a block $A_{p_{k-2}}$ inside each matrix $A_{p_{k-1}},$ which is repeating $(p_{k-1}-1)$ times in each row. Each $A_{p_{k-2}}$ corresponds to a pair of subsets $(V_{i_1i_2i_3},V_{j_1j_2j_3})$ for $i_3\neq j_3$ and $i_3,j_3\in \{0,1,\ldots,p_{k-1}-1\}.$ 
 Note that when $i_3=j_3$, the block corresponding to $(V_{i_1i_2i_3},V_{j_1j_2j_3})$ is the zero matrix.
 Thus, $A_{p_{k-1}}$ is in the following form.
 
 $$A_{p_{k-1}}=\begin{pmatrix}0&A_{p_{k-2}}&\cdots&A_{p_{k-2}}\\A_{p_{k-2}}&0&\cdots&A_{p_{k-2}}\\ \vdots&\vdots&\ddots&\vdots\\A_{p_{k-2}}&A_{p_{k-2}}&\cdots &0\end{pmatrix},$$
 and $A_{p_{k-2}}$ and $0$ are the blocks of size $\frac{n_{0}}{p_{1}p_{k}p_{k-1}}\times \frac{n_{0}}{p_{1}p_{k}p_{k-1}}.$\\
 ~\\
 Likewise we can continue splitting $A_{p_i}$ in each step till we get $A_{p_{3}},$ and thus inside $A_{p_3}$ we get $A_{p_{2}}$ which is of the following form
 $$A_{p_{2}}=(J-I)$$ and it is of size $p_{2}\times p_{2}.$ 
 \end{enumerate}
Hence we can compactly write ${\widetilde{A}_{11}}$ as the Kronecker product $(J_{p_{1}}-I_{p_{1}})\otimes A_{p_{k}},$ where $J_{p_1}$ and $I_{p_1}$ are the matrices of order $p_{1}\times p_{1}.$  In a similar way, $k-1$ tuples of the block matrices $(A_{p_{k}}, A_{p_{k-1}},\ldots, A_{p_{2}})$ can be seen as 
 \begin{align*}
 A_{p_{k}}&=(J_{p_k}-I_{p_k})\otimes A_{p_{k-1}}\\
A_{p_{k-1}}&=(J_{p_{k-1}}-I_{p_{k-1}})\otimes A_{p_{k-2}}\\
 &\vdots\\
 \A_{p_{3}}&=(J_{p_{3}}-I_{p_{3}})\otimes A_{p_{2}},
 \end{align*}
 where $J_{p_i}$ and $I_{p_{i}}$ are the matrices of order $p_{i}\times p_{i}$ ($3\leq i\leq k $). Lastly, $A_{p_{2}}=J_{p_2}-I_{p_2}$ is of order $p_{2}\times p_{2}.$ Now we can easily compute the eigenvalues by the backward substitution in the above system of equations to get the whole spectrum of ${\widetilde{A}_{11}}.$ Clearly each eigenvalue concerning $n,$ is of the form $(\pm)\frac{n}{n_0}\prod\limits_{i=1}^{k} \varphi(p_i)^{t_i},$ where $t_i\in\{0,1\}.$
 Let us illustrate the algorithm through an example.
 \begin{ex}
Let $n=45$ and thus $n_0=15$. Let $p_1=3$ and $p_2=5.$ We computed the following block form of the matrix ${\widetilde{A}_{11}}.$ We relabeled the vertices in the rows and the columns of ${\widetilde{A}_{11}}$ using the above algorithm. In each step of the algorithm we get the following sequences of cells. On following the respective labelling, we get 
\begin{enumerate}
    \item $[s],[sr^1],[sr^2],\ldots, [sr^{14}].$
    \item $[s],[sr^3],[sr^6],[sr^9],[sr^{12}],[sr],[sr^4],[sr^7],[sr^{10}],[sr^{13}],[sr^2],[sr^5],[sr^{8}],[sr^{11}],[sr^{14}].$
    \item $[s],[sr^6],[sr^{12}],[sr^3],[sr^{9}],[sr^{10}],[sr],[sr^7],[sr^{13}],[sr^4],[sr^5],[sr^{11}],[sr^2],[sr^{8}],[sr^{14}].$
\end{enumerate}
Since $n$ has only two prime divisors, thus we get only one block matrix that is, $A_{p_2}=(J-I)_{5\times 5}.$ So ${\widetilde{A}_{11}}$ can be seen as
$${\widetilde{A}_{11}}=3((J-I)_{3\times 3}\otimes A_{p_2}).$$
After expanding, we have
$${\widetilde{A}_{11}}=3\begin{pmatrix}
    0&J-I&J-I\\
     J-I&0&J-I\\
  J-I&J-I&0
 \end{pmatrix},$$
 where $0$ and $J-I$ are the matrices of order $5\times 5.$
 Therefore, the eigenvalues of $(J-I)_{3\times 3}$ are $\{-1,-1,2\}$ and $(J-I)_{5\times 5}$ are $\{-1,-1,-1,-1,4\}$. Hence the spectrum  of ${\widetilde{A}_{11}}$ is as follows $$\sigma({\widetilde{A}_{11}})=\begin{pmatrix}
    -12&-6&3&24&\\
    2&4&8&1
\end{pmatrix}.$$
\end{ex}
\subsection{Spectrum of the Laplacian Matrix}\label{sec4:SLM}
In this section, we will define the Laplacian matrix of the graph $\Ga_n$ of $D_n.$ Let $D=\text{diag}(\alpha_{1},\alpha_{2},\ldots,\alpha_{2n})$ where $\alpha_i's$ represent the vertex degrees of $\Ga_n,$ called the diagonal matrix. Let $L(n)$ denotes the Laplacian matrix of $\Ga_n$ and it is defined as $L(n)=D-A(n).$ \\
Using the results discussed in Section~\ref{sec4:AM}, we can conclude that if $\psi_{1},\ldots ,\psi_{n}$; $\lambda_{1},\ldots, \lambda_{\varphi(n)}$; $\gamma_{1},\ldots ,\gamma_{n-\varphi(n)}$ are the eigenvalues of $A_{11},A_{22},A_{33}$ respectively, then we can find the spectrum of the associated Laplacian matrices $L(\Ga_{\Om_i}).$ By the definition of Laplacian matrix, the eigenvalues of $L(\Ga_{\Om_2}), L(\Ga_{\Om_1})$ and $L(\Ga_{\Om_3})$  are  $\varphi(n)-\psi_{j_1},$ where $1\leq j_1\leq n$; $-\lambda_{j_2},$ where $1\leq j_2\leq \varphi(n)$; $-\gamma_{j_3},$ where $1\leq j_3\leq (n-\varphi(n))$ respectively. Thus using the concept of graph operations, we can get the whole spectrum of $L(n).$ \par 
Following the above discussion, let us first find the spectrum of $L
(\Ga_{\Om_i})$ for $i=1,2.$ We denote $\sigma_i,$ as the spectrum of $L(\Ga_{\Om_i}).$ The following table summarises the spectrum of the matrices $L(\Ga_{\Om_i})$ where $i=1,2.$
\begin{center}
 \begin{tabular}{|c|c|}
  \hline
 Spectrum of $L(\Ga_{\Om_1})={0-A_{22}}$
 &\quad\quad Spectrum of $L(\Ga_{\Om_2})={{\varphi}(n)I-A_{11}}$ \\
  \hline
  \hline
  &\\
 $\quad\quad\sigma_1=\begin{pmatrix}
    0\\
    \varphi(n)
\end{pmatrix}$
& $\sigma_2=
\begin{pmatrix}
    \varphi(n)-\mu(d_1)\frac{\varphi(n)}{\varphi(d_1)}&\cdots&\varphi(n)-\mu(d_{\tau(n_0)})\frac{\varphi(n)}{\varphi(d_{\tau(n_0)})}&\varphi(n)\\
   1&\cdots&\varphi(d_{\tau(n_0)})& n-n_0 
\end{pmatrix}$\\
&\\
\hline
\end{tabular}
  \end{center}
  \vglue 1mm
Note that $\mu(d_1)\frac{\varphi(n)}{\varphi(d_1)}=\varphi(n).$ Thus, $\varphi(n)-\mu(d_1)\frac{\varphi(n_0)}{\varphi(d_1)}=0.$ It is a well-known result that the multiplicity of the eigenvalue 0 of the Laplacian matrix equals the number of connected components in the graph. Since $\Ga_{\Om_2}$ is connected, thus the multiplicity of 0 is 1. Using the spectral Table~\ref{tab:2} of the graph operations, the spectrum  of ${L(\Ga_{\Om_1}{\vee}\Ga_{\Om_2})},$ denoted by $\sigma$ is given by $$\sigma=
    \begin{pmatrix}
   0& 2\varphi(n)-\mu(d_2)\frac{\varphi(n)}{\varphi(d_2)}&\cdots&2\varphi(n)-\mu(d_{\tau(n_0)})\frac{\varphi(n)}{\varphi(d_{\tau(n_0)})}&2\varphi(n)&n&n+\varphi(n)\\
   1&\varphi(d_2)&\cdots&\varphi(d_{\tau(n_0)})&n-n_0&\varphi(n)-1&1
\end{pmatrix}.$$
It is known that the eigenvalues of the Laplacian of the union of two graphs are the union of the eigenvalues of both graphs. Using this fact and $\Ga_{\Om_3}$ is $0$- regular graph of size  $n-\varphi(n)$, we concluded the spectrum of the Laplacian matrix $L(n)$ in the following theorem.
\begin{theorem}
    Let $\Ga_n$ be the generating graph of $D_n.$ Then the spectrum of $L(n),$ denoted by $\sigma(n)$ is given by
    $$\sigma(n)=
    \begin{pmatrix}
   0& 2\varphi(n)-\mu(d_2)\frac{\varphi(n)}{\varphi(d_2)}&\cdots&2\varphi(n)-\mu(d_{\tau(n_0)})\frac{\varphi(n)}{\varphi(d_{\tau(n_0)})}&2\varphi(n)&n&n+\varphi(n)\\
   n-\varphi(n)+1&\varphi(d_2)&\cdots&\varphi(d_{\tau(n_0)})&n-n_0&\varphi(n)-1&1
\end{pmatrix}.$$
\end{theorem}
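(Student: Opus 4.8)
The plan is to read off the Laplacian spectrum directly from the structural decomposition $\Ga_n=(\Ga_{\Om_1}\vee\Ga_{\Om_2})\cup\Ga_{\Om_3}$ of Equation~\ref{Eq:exp}, feeding the adjacency data of Corollary~\ref{Cor:spec2} through the join and union rules summarised in Table~\ref{tab:2}. First I would record the Laplacian spectra of the three pieces. Both $\Ga_{\Om_1}$ and $\Ga_{\Om_3}$ are empty graphs, so $L(\Ga_{\Om_1})$ has eigenvalue $0$ with multiplicity $\varphi(n)$ and $L(\Ga_{\Om_3})$ has eigenvalue $0$ with multiplicity $n-\varphi(n)$. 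For the regular graph $\Ga_{\Om_2}$ I would use $L(\Ga_{\Om_2})=\varphi(n)I-A_{11}$ together with the eigenvalues of $A_{11}$ from Corollary~\ref{Cor:spec2}: the adjacency eigenvalues $\mu(d_i)\frac{\varphi(n)}{\varphi(d_i)}$ of multiplicity $\varphi(d_i)$ for $1\le i\le\tau(n_0)$, together with $0$ of multiplicity $n-n_0$. Hence the Laplacian eigenvalues of $\Ga_{\Om_2}$ are $\varphi(n)-\mu(d_i)\frac{\varphi(n)}{\varphi(d_i)}$ with multiplicity $\varphi(d_i)$ and $\varphi(n)$ with multiplicity $n-n_0$, where the term $d_1=1$ yields the single eigenvalue $0$, consistent with $\Ga_{\Om_2}$ being connected.

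Next I would apply the join rule of Table~\ref{tab:2} to $\Ga_{\Om_1}\vee\Ga_{\Om_2}$, taking $\mathcal{G}_1=\Ga_{\Om_1}$ with $n_1=\varphi(n)$ and $\mathcal{G}_2=\Ga_{\Om_2}$ with $n_2=n$. The nonzero-indexed Laplacian eigenvalues of $\Ga_{\Om_1}$ each shift by $n_2=n$, contributing $n$ with multiplicity $\varphi(n)-1$; those of $\Ga_{\Om_2}$ each shift by $n_1=\varphi(n)$, contributing $2\varphi(n)-\mu(d_i)\frac{\varphi(n)}{\varphi(d_i)}$ with multiplicity $\varphi(d_i)$ for $2\le i\le\tau(n_0)$, together with $2\varphi(n)$ of multiplicity $n-n_0$; finally the join supplies $0$ and $n_1+n_2=n+\varphi(n)$, each once. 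This reproduces precisely the intermediate spectrum $\sigma$ displayed just before the theorem.

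Finally I would take the disjoint union with $\Ga_{\Om_3}$. Since the Laplacian spectrum of a disjoint union is the multiset union of the pieces' spectra, the only change is that the $n-\varphi(n)$ zeros of $L(\Ga_{\Om_3})$ merge with the single zero produced by the join, giving eigenvalue $0$ with multiplicity $n-\varphi(n)+1$; every other entry of $\sigma$ is carried over unchanged. This yields exactly the claimed spectrum $\sigma(n)$.

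The computation is purely a bookkeeping argument built on the adjacency spectrum, so the only point requiring genuine care is the multiplicity accounting for the eigenvalue $0$. The divisor $d_1=1$ already accounts for the zero eigenvalue of $L(\Ga_{\Om_2})$, reflecting that $\Ga_{\Om_2}$ is connected; in the join rule this is the smallest eigenvalue of $\mathcal{G}_2$, which is excluded from the shifted family $\ga_j^L+n_1$ (indices $j\ge 2$), while the join instead contributes one fresh $0$. After the union this single zero combines with the $n-\varphi(n)$ zeros of $L(\Ga_{\Om_3})$ to give multiplicity $n-\varphi(n)+1$, which must equal the number of connected components of $\Ga_n$. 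Verifying this equality against the general fact that the nullity of the Laplacian counts the components is the natural safeguard against an off-by-one slip in the zero multiplicity, and it is the one step I would double-check.
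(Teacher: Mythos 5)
Your proposal is correct and follows essentially the same route as the paper: both decompose $\Ga_n$ as $(\Ga_{\Om_1}\vee\Ga_{\Om_2})\cup\Ga_{\Om_3}$, compute $L(\Ga_{\Om_2})=\varphi(n)I-A_{11}$ from the adjacency spectrum of Corollary~\ref{Cor:spec2}, apply the join rule of Table~\ref{tab:2}, and then take the union with the isolated vertices to obtain the zero eigenvalue with multiplicity $n-\varphi(n)+1$. Your closing sanity check (nullity equals number of components) is also exactly the observation the paper invokes to pin down the multiplicity of $0$ in $\sigma_2$.
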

\begin{cor}
    For $n\in \N,$ the energy of the Laplacian matrix $L(n)$ is given by $$LE(n)=3n\varphi(n).$$
\end{cor}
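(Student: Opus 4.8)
The plan is to extract $LE(n)$ straight from the spectrum $\sigma(n)$ computed in the preceding theorem, using (as in the adjacency case) the convention $LE(n)=\sum_i|\mu_i|$, where $\mu_1,\dots,\mu_{2n}$ are the Laplacian eigenvalues of $\Ga_n$. The decisive first observation is that $L(n)=D-A(n)$ is positive semidefinite, so every $\mu_i\ge 0$ and the absolute values are superfluous: $LE(n)=\sum_i\mu_i$. This reduces the whole computation to evaluating a single trace.

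The shortest route then is to note $\sum_i\mu_i=\operatorname{tr}(L(n))=\sum_{x\in V}\deg(x)=2|E|$. Since it was shown in Section~\ref{sec:pre} that $\Ga_n$ has $\tfrac{3n\varphi(n)}{2}$ edges, this gives $LE(n)=2\cdot\tfrac{3n\varphi(n)}{2}=3n\varphi(n)$ with no further work. If instead one prefers to remain inside the spectral description, I would sum the eigenvalues of $\sigma(n)$ against their multiplicities, namely
\begin{equation*}
LE(n)=\sum_{\substack{d\mid n_0\\ d\neq 1}}\Bigl(2\varphi(n)-\mu(d)\tfrac{\varphi(n)}{\varphi(d)}\Bigr)\varphi(d)+2\varphi(n)(n-n_0)+n(\varphi(n)-1)+(n+\varphi(n)),
\end{equation*}
and then invoke the two standard identities $\sum_{d\mid n_0}\varphi(d)=n_0$ and $\sum_{d\mid n_0}\mu(d)=0$ (the latter valid since $n_0>1$; the degenerate case $n_0=1$ is checked by hand). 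These make the $2\varphi(n)n_0$ contributions cancel and collapse the expression to $2n\varphi(n)+n\varphi(n)=3n\varphi(n)$, confirming the trace count.

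I anticipate essentially no obstacle here; the only genuine point of care is fixing the definition of Laplacian energy. The very fact that the target value $3n\varphi(n)$ equals $2|E|$ pins it down as $\sum_i|\mu_i|$ (equivalently $\operatorname{tr} L(n)$) rather than the mean-centered Gutman--Zhou quantity $\sum_i\bigl|\mu_i-\tfrac{2|E|}{2n}\bigr|$, which for this graph would give a different, $n$-dependent value. Once positive semidefiniteness is invoked, the result is either the one-line trace identity or a routine M\"obius--totient summation.
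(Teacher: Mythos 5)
Your proposal is correct, and your primary argument is genuinely different from the paper's. The paper does precisely what you describe as your fallback route: it invokes positive semidefiniteness to drop the absolute values, then sums the explicit spectrum $\sigma(n)$ from the preceding theorem against multiplicities, reducing the total to $3n\varphi(n)-\varphi(n)-\varphi(n)\sum_{i\geq 2}\mu(d_i)$ and finishing with $\sum_{i\geq 2}\mu(d_i)=-1$, i.e.\ exactly the identities $\sum_{d\mid n_0}\varphi(d)=n_0$ and $\sum_{d\mid n_0}\mu(d)=0$ that you cite. Your main route --- $LE(n)=\operatorname{tr} L(n)=\sum_{x}\deg(x)=2|E|=3n\varphi(n)$ --- bypasses the spectrum entirely; it is shorter, immune to any slip in the spectral theorem, and it explains why the answer is exactly twice the edge count. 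What the paper's longer computation buys in exchange is a nontrivial consistency check of the Laplacian spectrum it has just derived: the M\"obius--totient cancellation must, and does, reproduce the trace. Your remark pinning down the convention is also on target --- the paper explicitly takes ``Laplacian energy'' to be $\sum_i\mu_i$, not the mean-centered Gutman--Zhou quantity, and only under that reading is the statement true. One caveat applies equally to you and to the paper: the case you defer to a hand check, $n_0=1$ (i.e.\ $n=1$), in fact fails, since there $\Gamma_1=K_2$ with Laplacian spectrum $\{0,2\}$, so $LE(1)=2\neq 3$; the root cause is that $Gen(1)$ contains the diagonal pair $(s,s)$, so the edge count $\tfrac{3n\varphi(n)}{2}$ itself breaks down at $n=1$. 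Thus the corollary, your trace identity, and the paper's proof all tacitly require $n\geq 2$.
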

\begin{proof}
    It is known that the Laplacian matrix of a graph is a positive semidefinite matrix, therfore all the eigenvalues are non-negative. Thus, the Laplacian energy is the sum of its eigenvalues.  To compute the energy of $L(n)$, we sum all its eigenvalues. Note that $\tau(n_0)=2^k$  where $k$ is the number of distinct prime divisors of $n.$
    Hence we get the following expression.
    \begin{align*}
     LE(n)&=3n\varphi(n)-\varphi(n)-\sum\limits_{i=2}^{2^k}\mu(d_i)\varphi(d_i)\frac{\varphi(n)}{\varphi(d_i)}\\
        &=3n\varphi(n)-\varphi(n)-\sum\limits_{i=2}^{2^k}\mu(d_i){\varphi(n)}.
         \end{align*}
         It is easy to check that $\sum\limits_{i=2}^{2^k}\mu(d_i){\varphi(n)}=-\varphi(n).$
         Therefore,
     $$LE(n)=3n\varphi(n).$$
    Hence the result follows.
\end{proof}
\section{Topological Indices of $\Delta_n$}\label{sec5:top}
In this section, we first recall some definitions which we will use throughout the section. The {\em topological indices} are useful invariants to study the structural properties of molecular graphs which are connected. We will compute some of the topological indices for the graph $\Delta_n.$ Let us first make some observations concerning the distance between the pair of vertices and the degree of the vertices in $\Delta_n.$\par
Let $\Gl$ be a connected graph. The distance between two vertices $v_i$ and $v_j$ of $\mathcal{G}$ is defined to be the length of the shortest path, denoted by $d(v_i,v_j)$. Here we consider unordered pair of vertices unless specified. For $n\in \N$, the graph $\Delta_n$ has the vertex set $S(D_n).$ Recall that the sum of the degree of vertices is equal to twice the number of edges in the graph. Using this fact, the sum of the vertex degrees of $\Delta_n$ is $3n\varphi(n)$ (see Equation~\ref{pre:sum}). However, the sum of  squares of the vertex degrees is given by 
$$\sum\limits_{v_i\in S(D_n)}\deg(v_i)^2=n^2\varphi(n)+n(2\varphi(n))^2.$$ Using Equation~\ref{eq:nbd}, we have
\begin{center}
    $d(v_i,v_j)=\begin{dcases}2 &\quad v_i,v_j\in \Om_1; \quad v_i=sr^i,v_j=sr^j \in\Om_2 \text{ and } v_j\notin B_i\\
    1 &\quad v_i=sr^i,v_j=sr^j\in \Om_2 \text{ and } v_j\in B_i.
    \end{dcases}
    $
\end{center}
Then, the sum of all the distance between the unordered pair of distinct vertices which are the members of $\Om_1$ is given by
\begin{align*}
    \sum\limits_{v_i, v_j\in \Om_1}d(v_i,v_j)=\frac{2\varphi(n)(\varphi(n)-1)}{2}.
\end{align*}
The sum of all the distances between the unordered pair of distinct vertices which are the elements of $\Om_1$ and $\Om_2$ is given by
\begin{align*}
    \sum\limits_{v_i\in \Om_1; v_j\in \Om_2}d(v_i,v_j)=n\varphi(n).
\end{align*}
However, the sum of all the distances between the unordered pair of distinct vertices which are the elements of $\Om_2$ is given by
\begin{align*}
    \sum\limits_{v_i,v_j\in \Om_2}d(v_i,v_j)=\frac{n\varphi(n)+2n(n-\varphi(n)-1)}{2}.
\end{align*}
From the above observations, one can determine the degree and distance based topological indices like the {\em Wiener} index $W(\Gl)$, the {\em hyper-Wiener} index $WW(\Gl)$, the {\em Zagreb} first $M_1(\Gl),$ {\em Zagreb} second $M_2(\Gl)$ indices, the {\em Schultz} index $\text{MTI}(\Gl)$ and the {\em Gutman} index $\text{Gut}(\Gl)$ of a graph $\Gl.$ For more details, one can refer to~ \cite{wiener,Hwiener,zagreb,schultz,gutman}. Now in the following theorem we will compute the expression of these indices for $\Gl=\Delta_n,$ in general for any $n\in \N.$
\begin{pro}
Let $\Delta_n$ be the graph formed by removing all the isolated vertices of the graph $\Ga_n$ of $\D_n$ for any $n\in \N$. Then we have 
\begin{table}[!ht]\label{Indices}
\centering
 \begin{tabular}{|c|c|}
 \hline
 Formula of topological indices &  Expression for $\Gl=\Delta_n$\\
 \hline
 \hline
 &\\
$W(\Gl)=\sum\limits_{1\leq i\neq j\leq n}d(v_i,v_j)$&$W(\Delta_n)=\varphi(n)\left(\varphi(n)+\frac {(n-2)}{2}\right)+n(n-1)$\\
&\\
\hline
&\\
$WW(\Gl)=\frac{1}{2} W(\Gl)+\frac{1}{2}\sum\limits_{\{u,v\}\subseteq V(\Gl)} d(u,v)^2$&$WW(\Delta_n)=\frac{3}{2}\left(\varphi(n)^2-\varphi(n)+n(n-1)\right)$\\
&\\
\hline
&\\
$M_1(\Gl)=\sum\limits_{v\in V(\Gl)}\deg(v)^2$
&$M_1(\Delta_n)=n^2\varphi(n)+4n\varphi(n)^2$\\
&\\
\hline
&\\
$M_2(\Gl)=\sum\limits_{uv\in E(\Gl)}\deg(u)deg(v)$
&$M_2(\Delta_n)=2n\varphi(n)^2(n+\varphi(n))$\\
&\\
\hline
&\\
$\text{MTI}(\Gl)=\sum\limits_{\{u,v\}\subseteq V(\Gl)} d(u,v)[\deg(u)+\deg(v)]$
&$\text{MTI}(\Delta_n)=n\varphi(n)\left(2\varphi(n)+5n-6\right)$\\
&\\
\hline
&\\
$\text{Gut}(\Gl)=\sum\limits_{\{u,v\}\subseteq V(\Gl)}d(u,v)[\deg(u)\deg(v)]$
&$\text{Gut}(\Delta_n)= n\varphi(n)((7n-6)\varphi(n)-n)$\\
&\\
\hline
  \end{tabular}
\caption{Topological indices of $\Delta_n.$}
\end{table}
\end{pro}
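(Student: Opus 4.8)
The plan is to exploit the very rigid structure of $\Delta_n$: by Equation~\ref{eq:nbd} it has diameter $2$, with an explicit description of which pairs lie at distance $1$ and which at distance $2$, and it carries only two vertex degrees, namely $n$ on $\Om_1$ and $2\varphi(n)$ on $\Om_2$. Every one of the six indices is a sum, over unordered pairs of vertices, of a function of the distance $d(u,v)$ and of the degrees $\deg(u),\deg(v)$. I would therefore split each such sum according to the three blocks of pairs, $\Om_1\times\Om_1$, $\Om_1\times\Om_2$ and $\Om_2\times\Om_2$. The key observation is that on each block the degrees are constant, so any degree-dependent factor pulls out of the sum, and the distance is either identically $2$ (inside $\Om_1$), identically $1$ (between $\Om_1$ and $\Om_2$), or one of the two values $1,2$ inside $\Om_2$. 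Since the three block distance-sums are exactly the quantities already recorded just before the statement, the whole computation reduces to bookkeeping.

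I would begin with the two purely degree-based indices. The first Zagreb index $M_1$ is literally the sum of squared degrees displayed above the statement, $\sum_{v}\deg(v)^2 = \varphi(n)\,n^2 + n\,(2\varphi(n))^2$, which collapses to $n^2\varphi(n)+4n\varphi(n)^2$. For the second Zagreb index $M_2$ I would classify the $\tfrac{3n\varphi(n)}{2}$ edges into the $n\varphi(n)$ edges joining $\Om_1$ to $\Om_2$, each contributing $n\cdot 2\varphi(n)$, and the $\tfrac{n\varphi(n)}{2}$ edges internal to $\Om_2$, each contributing $(2\varphi(n))^2$; summing and factoring gives $2n\varphi(n)^2(n+\varphi(n))$.

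Next come the two distance-based indices. For the Wiener index I simply add the three block distance-sums $\varphi(n)(\varphi(n)-1)$, $n\varphi(n)$ and $\tfrac{n\varphi(n)+2n(n-\varphi(n)-1)}{2}$ and collect terms. The hyper-Wiener index needs, in addition to $W$, the quantity $\sum_{\{u,v\}}d(u,v)^2$; since $d\in\{1,2\}$ this differs from $W$ only through the pairs at distance $2$ (each weighted $4$ rather than $2$), so the only extra input is the count of distance-$2$ pairs, namely $\binom{\varphi(n)}{2}$ inside $\Om_1$ together with $\binom{n}{2}-\tfrac{n\varphi(n)}{2}$ inside $\Om_2$. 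Substituting into $WW=\tfrac12 W+\tfrac12\sum_{\{u,v\}} d(u,v)^2$ yields the stated expression.

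Finally, for the mixed Schultz index $\text{MTI}$ and Gutman index $\text{Gut}$ I would use that on each block the degree-sum $\deg(u)+\deg(v)$ (respectively the degree-product $\deg(u)\deg(v)$) is constant, taking the values $2n$, $n+2\varphi(n)$, $4\varphi(n)$ (respectively $n^2$, $2n\varphi(n)$, $4\varphi(n)^2$). Hence each of these indices equals the sum over the three blocks of that constant times the corresponding block distance-sum already in hand. The remaining work is entirely algebraic: assemble the three contributions and simplify to the closed forms in the table. I expect no conceptual obstacle, the only delicate point being the combinatorial bookkeeping inside $\Om_2$ — keeping straight the $\tfrac{n\varphi(n)}{2}$ adjacent (distance-$1$) pairs versus the $\binom{n}{2}-\tfrac{n\varphi(n)}{2}$ non-adjacent (distance-$2$) pairs and the ensuing simplification, which is where an arithmetic slip is most likely to creep in.
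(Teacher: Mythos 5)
Your block-decomposition plan is exactly the argument the paper itself relies on: the proposition is stated with no separate proof body and is meant to follow from the block distance-sums and degree-sums recorded immediately before it, which are precisely the quantities you use. For five of the six entries your plan does close: carrying out the bookkeeping gives $W$, $WW$, $M_1$, $M_2$ and $\mathrm{MTI}$ exactly as in the table (I checked the algebra; your block constants $2n$, $n+2\varphi(n)$, $4\varphi(n)$ for degree-sums and $n^2$, $2n\varphi(n)$, $4\varphi(n)^2$ for degree-products, and your pair counts, are all correct).

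However, your final claim that the same assembly ``simplifies to the closed forms in the table'' fails for the Gutman index, and this is a genuine obstruction: the table's entry is itself wrong, so no correct execution of your plan (or any plan) can reproduce it. Your own blocks give
\begin{align*}
\mathrm{Gut}(\Delta_n) &= 2n^2\binom{\varphi(n)}{2} + 2n\varphi(n)\cdot n\varphi(n) + 4\varphi(n)^2\cdot\frac{n\varphi(n)}{2} + 8\varphi(n)^2\left(\binom{n}{2}-\frac{n\varphi(n)}{2}\right)\\
&= n\varphi(n)\left(7n\varphi(n) - n - 2\varphi(n)^2 - 4\varphi(n)\right),
\end{align*}
whereas the table asserts $n\varphi(n)\left((7n-6)\varphi(n)-n\right)$; the two differ by $2n\varphi(n)^2\left(\varphi(n)-1\right)$, which is nonzero whenever $\varphi(n)>1$, i.e.\ for all $n\geq 3$. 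A concrete check: for $n=3$ the graph $\Delta_3$ is $K_5$ minus the edge $\{r,r^2\}$, and summing $d(u,v)\deg(u)\deg(v)$ over its ten vertex pairs gives $18+72+48=138$, matching your formula, while the table gives $162$. (The paper's subsequent corollary values for $\mathrm{Gut}$ at $n=p$ and $n=2^{\alpha}$ inherit the error.) So you should either state and prove the corrected expression $n\varphi(n)\left(7n\varphi(n)-n-2\varphi(n)^2-4\varphi(n)\right)$, or explicitly flag the table's Gutman entry as erroneous; as written, the last step of your proposal cannot be completed. It is worth noting that the spot you flagged as the likely source of an arithmetic slip --- the distance-$1$ versus distance-$2$ bookkeeping inside $\Omega_2$, where the degree products generate $\varphi(n)^3$ terms --- is exactly where the paper's formula goes astray.
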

In the following corollary, we compute indices for some special cases of $n.$
\begin{cor}
    If $n=p,\,2^\alpha$, where $\alpha\geq 1$ and $p$ is a prime, then the following table summarises the above defined topological indices for $\Delta_n$.
    \newpage

   \begin{table}[!ht]
   \centering
 \begin{tabular}{|c|c|c|c|}
  \hline
{Indices} & $\quad \quad {n=p}$& $\quad \quad{n=2^\alpha;\alpha\geq 1}$\\
  \hline
  \hline
  ${W(\Delta_n)}$& $\left(\frac{5p-4}{2}\right)(p-1)$&$3.2^{\alpha-1}(2^\alpha-1)$\\
\hline
${WW(\Delta_n)}$&$3(p-1)^2$&$\frac{3}{2}(2^{\alpha-1}(2^{\alpha-1}-1)+2^{\alpha}(2^{\alpha}-1))$\\
\hline
${M_1(\Delta_n)}$&$p(p-1)(5p-4)$&$3.2^{3\alpha-1}$\\
\hline
${M_2(\Delta_n)}$&$2p(p-1)^2(2p-1)$& $3.2^{2(2\alpha-1)}$\\
\hline
${\text{MTI}(\Delta_n)}$&$p(p-1)(7p-8)$&$3. 2^\alpha(2^\alpha-1)$\\
\hline
${\text{Gut}(\Delta_n)}$&$p(p-1)(7(p-1)^2-1)$& $2^{2\alpha}(7.2^{\alpha-1}-4)$\\
\hline
\end{tabular}
  \caption{Topological indices of $\Delta_p$ and $\Delta_{2^{\alpha}}.$}
  \end{table}
  \vglue 1mm
    
\end{cor}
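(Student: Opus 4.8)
The plan is to exploit the two-block structure of $\Delta_n$ that has already been established: the vertex set splits as $\Om_1\cup\Om_2$ with $|\Om_1|=\varphi(n)$ and $|\Om_2|=n$, every vertex of $\Om_1$ has degree $n$ and every vertex of $\Om_2$ has degree $2\varphi(n)$, and the distance function takes only the values $1$ and $2$ as recorded just before the statement. Since all six indices are sums over either vertices, edges, or unordered pairs of vertices, each one reduces to finite bookkeeping over these blocks once I fix the relevant counts. The counts I would set down first are: there are $\binom{\varphi(n)}{2}$ pairs inside $\Om_1$ (all at distance $2$), there are $n\varphi(n)$ pairs across $\Om_1$ and $\Om_2$ (all at distance $1$, since each rotation generator is joined to every reflection), and there are $\binom{n}{2}$ pairs inside $\Om_2$, of which the adjacent ones number $|E(\Ga_{\Om_2})|=\tfrac{n\varphi(n)}{2}$ (distance $1$) while the remaining $\tfrac{n(n-1-\varphi(n))}{2}$ are at distance $2$.

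For the degree-based indices I would start from the degree list. The first Zagreb index is immediate from the identity $\sum_{v}\deg(v)^2=n^2\varphi(n)+n(2\varphi(n))^2$ already recorded above. For the second Zagreb index I would split the edge set of $\Delta_n$ into the $n\varphi(n)$ edges joining $\Om_1$ to $\Om_2$, each contributing $n\cdot 2\varphi(n)$, and the $\tfrac{n\varphi(n)}{2}$ edges inside $\Om_2$, each contributing $(2\varphi(n))^2$, and then sum. The Wiener index is then just the sum of the three distance totals already computed in the excerpt, and the hyper-Wiener index needs only the extra quantity $\sum d(u,v)^2$; because $d\in\{1,2\}$ this comes from the very same partition with the distance-$2$ blocks now weighted by $4$ instead of $2$, after which $WW=\tfrac12 W+\tfrac12\sum d(u,v)^2$ delivers the claimed closed form.

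The mixed distance--degree indices, namely the Schultz index $\text{MTI}$ and the Gutman index $\text{Gut}$, are the substantive part, and I would handle both by the same four-way partition of unordered pairs, attaching to each block its constant distance together with the constant degree-sum $\deg(u)+\deg(v)$ (for $\text{MTI}$) or degree-product $\deg(u)\deg(v)$ (for $\text{Gut}$): the $\Om_1$--$\Om_1$ block carries distance $2$, degree-sum $2n$ and degree-product $n^2$; the $\Om_1$--$\Om_2$ block distance $1$, degree-sum $n+2\varphi(n)$ and product $2n\varphi(n)$; the adjacent $\Om_2$--$\Om_2$ block distance $1$ with degree-sum $4\varphi(n)$ and product $4\varphi(n)^2$; and the non-adjacent $\Om_2$--$\Om_2$ block distance $2$ with the same degree-sum and product. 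Multiplying each weight by the corresponding pair count, summing, and factoring out $n\varphi(n)$ yields the entries of the table. The main obstacle is entirely the bookkeeping and the subsequent algebraic collapse to closed form: one must keep the distance-$2$ pairs inside $\Om_2$ scrupulously separated from the distance-$1$ pairs, since the two carry identical degree weights but different distance factors, and in the Gutman case the degree-product weight $4\varphi(n)^2$ on the distance-$2$ block inside $\Om_2$ is precisely the term most easily mishandled when simplifying, so I would verify the final expression against a small case such as $n=3$ or $n=4$ before committing to it.
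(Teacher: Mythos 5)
Your bookkeeping plan is sound, and it is in essence the paper's own route: the paper's Proposition packages exactly your four-way pair partition (with the counts $\binom{\varphi(n)}{2}$, $n\varphi(n)$, $\frac{n\varphi(n)}{2}$, $\frac{n(n-1-\varphi(n))}{2}$ and the degree weights you list) into general formulas in $n$ and $\varphi(n)$, and the corollary is then just the substitution $\varphi(p)=p-1$, $\varphi(2^\alpha)=2^{\alpha-1}$. The genuine gap is your final claim that ``multiplying each weight by the corresponding pair count, summing, and factoring out $n\varphi(n)$ yields the entries of the table'': you never carry out that algebra, and for three of the twelve entries no correct execution can yield them, because those entries are false. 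Executing your plan gives $\text{Gut}(\Delta_n)=n\varphi(n)\left(7n\varphi(n)-n-4\varphi(n)-2\varphi(n)^2\right)$, equivalently $9n^2\varphi(n)^2-M_1(\Delta_n)-M_2(\Delta_n)$ since $\Delta_n$ has diameter at most $2$; at $n=p$ this is $p(p-1)(5p^2-8p+2)$, not $p(p-1)(7(p-1)^2-1)$, and at $n=2^\alpha$ it is $3\cdot 2^{3\alpha-1}(2^\alpha-1)$, not $2^{2\alpha}(7\cdot 2^{\alpha-1}-4)$. Similarly $\text{MTI}(\Delta_{2^\alpha})=3\cdot 2^{2\alpha}(2^\alpha-1)$, not $3\cdot 2^{\alpha}(2^\alpha-1)$. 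The Gutman row is already wrong in the paper's general Proposition (its formula $n\varphi(n)((7n-6)\varphi(n)-n)$ differs from the correct one by $2n\varphi(n)^2(\varphi(n)-1)$, i.e.\ precisely a mishandling of the weight on the pairs inside $\Omega_2$ that you yourself flagged as the dangerous term), while the MTI error is only a substitution slip in the corollary's $2^\alpha$ column, the general formula $n\varphi(n)(2\varphi(n)+5n-6)$ being correct.

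The sanity check you wisely proposed would have exposed all of this, so it must actually be performed, not just promised. For $n=3$, $\Delta_3$ is $K_5$ minus the edge joining the two rotations, with degrees $3,3,4,4,4$, so $\text{Gut}(\Delta_3)=2\cdot 9+6\cdot 12+3\cdot 16=138$, whereas the table claims $3\cdot 2\cdot(7\cdot 4-1)=162$. For $n=4$, $\Delta_4$ is $4$-regular on $6$ vertices with $W(\Delta_4)=18$, hence $\text{MTI}(\Delta_4)=8\cdot 18=144$ and $\text{Gut}(\Delta_4)=16\cdot 18=288$, whereas the table claims $36$ and $160$ respectively. In summary: your method, carried out, proves the $W$, $WW$, $M_1$, $M_2$ rows and the MTI entry at $n=p$, but it refutes rather than proves the remaining three entries; a complete write-up must correct those entries (to $p(p-1)(5p^2-8p+2)$, $3\cdot 2^{2\alpha}(2^\alpha-1)$, and $3\cdot 2^{3\alpha-1}(2^\alpha-1)$) instead of asserting the table as stated.
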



\begin{thebibliography}{10}
\bibitem{spectra}
{\sc A.~Abdussakir, R.~R. Elvierayani, and M.~Nafisah}, {\em On the spectra of
  commuting and non commuting graph on dihedral group}, CAUCHY: Jurnal
  Matematika Murni dan Aplikasi, 4 (2017), pp.~176--182.

\bibitem{zindex}
{\sc M.~R. Ahmadi and R.~Jahani-Nezhad}, {\em Energy and wiener index of
  zero-divisor graphs}, Iranian Journal of Mathematical Chemistry, 2 (2011),
  pp.~45--51.

\bibitem{conradsubgroup2}
{\sc K.~CONRAD}, {\em Subgroup series ii}.
\newblock
  \url{https://kconrad.math.uconn.edu/blurbs/grouptheory/subgpseries2.pdf}.

\bibitem{crestani2013generating}
{\sc E.~Crestani and A.~Lucchini}, {\em The generating graph of finite soluble
  groups}, Israel Journal of Mathematics, 198 (2013), pp.~63--74.

\bibitem{MR1987022}
{\sc E.~Detomi and A.~Lucchini}, {\em Crowns and factorization of the
  probabilistic zeta function of a finite group}, J. Algebra, 265 (2003),
  pp.~\,651--668.

\bibitem{godsil}
{\sc C.~Godsil and G.~F. Royle}, {\em Algebraic graph theory}, Springer Science
  \& Business Media, 2001.

\bibitem{Guralnick}
{\sc R.~Guralnick and W.~Kantor}, {\em Probabilistic generation of finite
  simple groups}, Journal of Algebra, 234 (2000), pp.~\,743--792.

\bibitem{gutman}
{\sc I.~Gutman}, {\em Selected properties of the schultz molecular topological
  index}, Journal of Chemical Information and Computer Sciences, 34 (1994),
  pp.~1087--1089.

\bibitem{zagreb}
{\sc I.~Gutman and N.~Trinajsti{\'c}}, {\em Graph theory and molecular
  orbitals. total $\varphi$-electron energy of alternant hydrocarbons},
  Chemical physics letters, 17 (1972), pp.~535--538.

\bibitem{Hwiener}
{\sc D.~J. Klein, I.~Lukovits, and I.~Gutman}, {\em On the definition of the
  hyper-wiener index for cycle-containing structures}, Journal of chemical
  information and computer sciences, 35 (1995), pp.~50--52.

\bibitem{Lieback}
{\sc M.~W. Liebeck and A.~Shalev}, {\em Simple groups, probabilistic methods,
  and a conjecture of kantor and lubotzky}, Journal of Algebra, 184 (1996),
  pp.~31--57.

\bibitem{lucchini2017diameter}
{\sc A.~Lucchini}, {\em The diameter of the generating graph of a finite
  soluble group}, Journal of Algebra, 492 (2017), pp.~28--43.

\bibitem{planar}
{\sc A.~Lucchini}, {\em Finite groups with planar generating graph}, Australas.
  J. Combin., 76 (2020), pp.~220--225.

\bibitem{MR2515391}
{\sc A.~Lucchini and A.~Mar\'{o}ti}, {\em On the clique number of the
  generating graph of a finite group}, Proc. Amer. Math. Soc., 137 (2009),
  pp.~\,3207--3217.

\bibitem{lucchini2009clique}
{\sc A.~Lucchini and A.~Mar{\'o}ti}, {\em On the clique number of the
  generating graph of a finite group}, Proceedings of the American Mathematical
  Society, 137 (2009), pp.~3207--3217.

\bibitem{MR2816431}
{\sc A.~Lucchini and A.~Mar\'{o}ti}, {\em Some results and questions related to
  the generating graph of a finite group}, in Ischia group theory 2008, World
  Sci. Publ., Hackensack, NJ, 2009, pp.~183--208.

\bibitem{Nindex}
{\sc M.~Mirzargar and A.~R. Ashrafi}, {\em Some distance-based topological
  indices of a non-commuting graph}, Hacet. J. Math. Stat., 41 (2012),
  pp.~515--526.

\bibitem{Ncindex}
{\sc N.~H. Sarmin, N.~I. Alimon, and A.~Erfanian}, {\em Topological indices of
  the non-commuting graph for generalised quaternion group}, Bulletin of the
  Malaysian Mathematical Sciences Society, 43 (2020), pp.~3361--3367.

\bibitem{schultz}
{\sc H.~P. Schultz}, {\em Topological organic chemistry. 1. graph theory and
  topological indices of alkanes}, Journal of Chemical Information and Computer
  Sciences, 29 (1989), pp.~227--228.

\bibitem{west2001introduction}
{\sc D.~B. West et~al.}, {\em Introduction to graph theory}, vol.~2, Prentice
  hall Upper Saddle River, 2001.

\bibitem{wiener}
{\sc H.~Wiener}, {\em Structural determination of paraffin boiling points},
  Journal of the American chemical society, 69 (1947), pp.~17--20.
\end{thebibliography}
\end{document}